\newtheorem{thm}{Theorem}[section]
\newtheorem{lem}[thm]{Lemma}
\newtheorem{prop}[thm]{Proposition}
\theoremstyle{definition}
\newtheorem{defn}[thm]{Definition}
\newtheorem{rem}[thm]{Remark}
\newtheorem{exmp}[thm]{Example}
\numberwithin{equation}{section}
\begin{document}

\title{On higher-rank Khovanskii-Teissier inequalities}

\author{Yashan Zhang}

\address{School of Mathematics, Hunan University, Changsha 410082, China}
\email{yashanzh@hnu.edu.cn}
\thanks{Partially supported by Fundamental Research Funds for the Central Universities (No. 531118010468) and National Natural Science Foundation of China (No. 12001179)}

\begin{abstract}
We shall discuss a higher-rank Khovanskii-Teissier inequality, generalizing a theorem of Li in \cite{Li16}. In the course of the proof, we develop new Hodge-Riemann bilinear relations in certain mixed and degenerate settings, which in themselves slightly extend the existing results and imply new Khovanskii-Teissier type inequalities and log-concavity results.
\end{abstract}

\maketitle

\section{Introduction}

\subsection{Backgrounds} Around the year 1979, Khovanskii and Teissier independently discovered deep inequalities in algebraic geometry, which are profound analogs of Alexandrov-Fenchel inequalities in convex geometry. There are many remarkable further developments on Khovanskii-Teissier type inequalities (see e.g. \cite{BFJ,C,Co,De,DN,FX,Gr,La,LX,Li17,RT,Te,X} and references therein), among which we may recall the following one on an $n$-dimensional compact K\"ahler manifold $X$ as an example (see \cite{De,DN,Gr}). If $\omega_1,...,\omega_{n-1}$ are K\"ahler metrics on $X$ and arbitrarily take $[\alpha]\in H^{1,1}(X,\mathbb R)$, then we have
$$\left(\int_X\omega_1\wedge...\wedge\omega_{n-2}\wedge\omega_{n-1}\wedge\alpha\right)^2\ge\left(\int_X\omega_1\wedge...\wedge\omega_{n-2}\wedge\omega_{n-1}^2\right)\left(\int_X\omega_1\wedge...\wedge\omega_{n-2}\wedge\alpha^2\right),$$
and the equality holds if and only if $[\alpha]$ and $[\omega_{n-1}]$ are proportional.

Given the above inequality, as proposed in \cite{Li13,Li16}, it is natural to ask whether some similar inequalities hold for elements in $H^{p,p}(X,\mathbb R)$ for $p\ge2$, which may be called \emph{higher-rank Khovanskii-Teissier inequalities} (see \cite{RT}). \\

In the remaining part of this note, $X$ is an $n$-dimensional compact K\"ahler manifold with a fixed background K\"ahler metric $\omega_X$, and $h^{p,q}:=dim H^{p,q}(X,\mathbb C)$ is the Hodge number of $X$.

Our study here is mainly motivated by Li's works \cite{Li13,Li16}, which gave a very natural higher-rank generalization of the Khovanskii-Teissier inequality recalled above. More precisely, given $2\le p\le[n/2]$ and K\"ahler metrics $\omega,\omega_1,...,\omega_{n-2p}$ on $X$, and denote $\Omega:=\omega_1\wedge...\wedge\omega_{n-2p}$, then \cite[Theorem 1.3(3)]{Li16} states that the followings are equivalent:
\begin{itemize}
\item[(i)] For every $[\gamma]\in H^{p,p}(X,\mathbb C)$, there holds
\begin{align}\label{kt0}
\left(\int_X\Omega\wedge\omega^{p}\wedge\gamma\right)\left(\int_X\Omega\wedge\omega^{p}\wedge\bar\gamma\right)\ge\left(\int_X\Omega\wedge\omega^{2p}\right)\left(\int_X\Omega\wedge\gamma\wedge\bar\gamma\right);
\end{align}
\item[(ii)] For all $1\le l\le[p/2]$, $h^{2l-1,2l-1}=h^{2l,2l}$;
\end{itemize} 
moreover, if condition (ii) holds, then a $[\gamma]\in H^{p,p}(X,\mathbb C)$ satisfies the equality in \eqref{kt0} if and only if $[\gamma]$ is proportional to $[\omega^{p}]$. \\

One can find more recent  progresses on higher-rank Khovanskii-Teissier inequalities on certain Schur classes of ample vector bundles in Ross-Toma's work \cite{RT}. \\

\subsection{A generalized $m$-positivity and statements of results} In this note, we shall extend Li's above-recalled result to a more general setting, in which the involved forms/classes are no longer required to be positive. To state the results, let's firstly give some necessary preparations, particularly including a notion called generalized $m$-positivity. 

\subsubsection{A generalized $m$-positivity} We first introduce the positivity condition used in the discussions.
\begin{defn}\label{m-pos}
Let $\Phi$ be a (strictly) positive $(m,m)$-form on $X$ and $\phi$ a real $(1,1)$-form on $X$. We say a real $(1,1)$-form $\alpha$ on $X$ is \emph{$(n-m)$-positive with respect to $(\Phi,\phi)$} if 
$$\Phi\wedge\phi^{n-m-k}\wedge\alpha^k>0$$	
for any $1\le k\le n-m$. In particular, the case that $(\Phi,\phi)=(\omega_X^{m},\omega_X)$ gives the original $(n-m)$-positivity with respect to a fixed K\"ahler metric $\omega_X$, and in this case we say $\alpha$ is $(n-m)$-positive with respect to $\omega_X$.
\end{defn}
It will be seen from the following discussions (particularly Lemma \ref{lem_c}) that the above generalized $m$-positivity notion naturally appears as the characterization of the G{\aa}rding cone of certain hyperbolic polynomial. Moreover, comparing with the original $m$-positivity with respect to one fixed K\"ahler metric, one of the \emph{important advantages} of the above generalized $m$-positivity is that \emph{it is very flexible when the induction or iteration arguments are involved}. We should mention that a version of $(n-m)$-positivity with respect to $m$ K\"ahler metrics $\omega_1,...,\omega_{m}$ has been proposed in \cite[Remark 2.12]{X}, which seems slightly stronger than the one defined above.

\subsubsection{Higher-rank Khovanskii-Teissier inequalities} The following is our first result, which extends previous results of Li \cite{Li13,Li16} to a degenerate setting.
\begin{thm}\label{thm_kt}
Assume $\lambda_0=0$ and $\lambda_1,...,\lambda_N\in\mathbb Z_{\ge1}$ with $\sum_{i=1}^N\lambda_i=n-2p$, $p\in\mathbb Z_{\ge1}$. Assume $\alpha_{ij_i}, 1\le i\le N$ and $1\le j_i\le\lambda_i$, be semi-positive closed real $(1,1)$-forms on $X$ such that $\alpha_{ij_i}$ has at least $(n-\sum_{s=0}^{i-1}\lambda_s)$ positive eigenvalues. Denote $\Omega:=\bigwedge_{1\le i\le N,1\le j_i\le \lambda_i}\alpha_{ij_i}$. Assume $\eta$ is a semi-positive closed real $(1,1)$-forms on $X$ of at least $(n-\sum_{s=1}^{N}\lambda_s)$ positive eigenvalues, and $\alpha$ a closed real $(1,1)$-forms on $X$ which is $2$-positive with respect to $(\Omega\wedge\eta^{n-2-\sum_{s=1}^{N}\lambda_s},\eta)$. Assume $p\ge2$. Then the followings are equivalent:
\begin{itemize}
\item[(i)] For every $[\gamma]\in H^{p,p}(X,\mathbb C)$, there holds
\begin{align}\label{kt}
\left(\int_X\Omega\wedge(\eta^{p-1}\wedge\alpha)\wedge\gamma\right)\left(\int_X\Omega\wedge(\eta^{p-1}\wedge\alpha)\wedge\bar\gamma\right)\ge\left(\int_X\Omega\wedge(\eta^{p-1}\wedge\alpha)^2\right)\left(\int_X\Omega\wedge\gamma\wedge\bar\gamma\right);
\end{align}
\item[(ii)] For all $1\le l\le[p/2]$, $h^{2l-1,2l-1}=h^{2l,2l}$.
\end{itemize} Moreover, if condition (ii) holds, then a $[\gamma]\in H^{p,p}(X,\mathbb C)$ satisfies the equality in \eqref{kt} if and only if $[\gamma]$ is proportional to $[\eta^{p-1}\wedge\alpha]$. 
\end{thm}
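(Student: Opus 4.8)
Our plan is to rephrase \eqref{kt} as a reverse Cauchy--Schwarz (Hodge-index) inequality and to reduce the whole statement to a single signature count governed by (ii), following the scheme of the proof of Theorem~\ref{thm_li}. Set $Q(\gamma,\delta):=\int_X\Omega\wedge\gamma\wedge\bar\delta$, a Hermitian form on the finite-dimensional space $H^{p,p}(X,\mathbb C)$, and write $v:=[\eta^{p-1}\wedge\alpha]\in H^{p,p}(X,\mathbb C)$. Since $\Omega,\eta,\alpha$ are real, $\int_X\Omega\wedge(\eta^{p-1}\wedge\alpha)\wedge\gamma=Q(\gamma,v)$, $\int_X\Omega\wedge(\eta^{p-1}\wedge\alpha)\wedge\bar\gamma=\overline{Q(\gamma,v)}$ and $\int_X\Omega\wedge(\eta^{p-1}\wedge\alpha)^2=Q(v,v)$, so \eqref{kt} is precisely $|Q(\gamma,v)|^2\ge Q(v,v)\,Q(\gamma,\gamma)$. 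Writing $\gamma=cv+w$ with $Q(w,v)=0$ gives $|Q(\gamma,v)|^2-Q(v,v)Q(\gamma,\gamma)=-Q(v,v)\,Q(w,w)$, so, as soon as $Q(v,v)>0$, \eqref{kt} holds for all $\gamma$ if and only if $Q$ is negative semidefinite on $v^{\perp}:=\{\gamma:Q(\gamma,v)=0\}$, equivalently if and only if $Q$ has exactly one positive eigenvalue; in that case equality holds exactly for $\gamma\in\mathbb C v+\ker Q$, hence exactly for $\gamma$ proportional to $v$ provided $Q$ is moreover non-degenerate. Thus the theorem will follow from two facts: \textbf{(A)} $Q(v,v)=\int_X\Omega\wedge\eta^{2p-2}\wedge\alpha^2>0$; and \textbf{(B)} the number of positive eigenvalues of $Q$ equals $1$ when (ii) holds, is $\ge2$ when (ii) fails, and $Q$ is non-degenerate.

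Fact (A) is where the positivity hypotheses are used: $\Omega\wedge\eta^{2p-2}$ is a wedge product of $n-2$ semi-positive closed $(1,1)$-forms, hence a nonnegative $(n-2,n-2)$-form which, by the eigenvalue lower bounds on the $\alpha_{ij}$ and on $\eta$, is strictly positive on a nonempty open subset of $X$; the hypothesis that $\alpha$ be $2$-positive with respect to $(\Omega\wedge\eta^{2p-2},\alpha_{N,\lambda_N})$ is calibrated exactly so that $\Omega\wedge\eta^{2p-2}\wedge\alpha^2$ is a nonnegative volume form on $X$, and then $\int_X\Omega\wedge\eta^{2p-2}\wedge\alpha^2>0$.

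Fact (B) is the Hodge-theoretic core, and it is here that we develop the new mixed Hodge--Riemann bilinear relations. The idea is to perturb $\alpha_{ij}\rightsquigarrow\alpha_{ij}+\varepsilon\omega_X$ and $\eta\rightsquigarrow\eta+\varepsilon\omega_X$ (these are K\"ahler for $\varepsilon>0$, and the open $2$-positivity of $\alpha$ is unaffected); for $\varepsilon>0$ the relevant mixed Hard Lefschetz theorem and Hodge--Riemann relations for products of K\"ahler classes are due to Dinh--Nguyen and Timorin; then one removes the perturbations one form at a time, using that $\alpha_{ij}$ retains at least $n-\sum_{s=0}^{i-1}\lambda_s$ positive eigenvalues---precisely what each successive wedge requires to keep the relevant Lefschetz-type maps of maximal rank---so that the signature of $Q=Q_0$ coincides with that of $Q_\varepsilon$. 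Granting this package, one obtains a $Q$-orthogonal decomposition $H^{p,p}(X,\mathbb C)=\bigoplus_{k=0}^{p}[\eta]^{k}\wedge\mathcal P_k$, where $\mathcal P_k\subseteq H^{p-k,p-k}(X,\mathbb C)$ is the mixed-primitive subspace $\{\gamma':\Omega\wedge\eta^{2k+1}\wedge\gamma'=0\}$ (orthogonality is immediate from this vanishing, since for $k<l$ one has $k+l\ge 2k+1$ and factors out $\Omega\wedge\eta^{2k+1}$), of dimension $h^{p-k,p-k}-h^{p-k-1,p-k-1}$, on which $Q$ restricts to a definite form of sign $(-1)^{p-k}$ by the mixed Hodge--Riemann relation; since these dimensions telescope to $h^{p,p}$ and the $Q$-orthogonal pieces carry definite forms, the decomposition is genuine and $Q$ is non-degenerate. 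Hence the number of positive eigenvalues of $Q$ equals $\sum_{k:\,p-k\ \mathrm{even}}\bigl(h^{p-k,p-k}-h^{p-k-1,p-k-1}\bigr)=h^{0,0}+\sum_{j=1}^{[p/2]}\bigl(h^{2j,2j}-h^{2j-1,2j-1}\bigr)$, and since $h^{2j-1,2j-1}\le h^{2j,2j}$ by Hard Lefschetz for $j\le[p/2]$, this sum is $1$ exactly when $h^{2j-1,2j-1}=h^{2j,2j}$ for all $1\le j\le[p/2]$, i.e. exactly under (ii), and is at least $2$ otherwise. This proves (B).

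Combining (A) and (B) finishes the argument: if (ii) holds, $Q$ has one positive eigenvalue, is non-degenerate, and $Q(v,v)>0$, so by the first paragraph \eqref{kt} holds for every $\gamma$ with equality if and only if $\gamma$ is proportional to $[\eta^{p-1}\wedge\alpha]$; if (ii) fails, $Q$ admits a positive-definite subspace $W$ of dimension $\ge2$, and since $Q(\cdot,v)\not\equiv0$ the intersection $W\cap v^{\perp}$ is nonzero, so any nonzero $\gamma$ in it has $Q(\gamma,v)=0$ and $Q(\gamma,\gamma)>0$, violating \eqref{kt}; hence (i) and (ii) are equivalent. The main obstacle is the degeneration argument inside the proof of (B): the classical mixed Hodge--Riemann and Hard Lefschetz theorems require strictly positive (K\"ahler) classes, and a naive passage to the limit only gives upper semicontinuity of the number of positive eigenvalues of $Q_t$---which suffices for the implication (ii)$\Rightarrow$(i) but neither for its converse nor for the equality characterization; making the quantitative eigenvalue hypotheses genuinely propagate non-degeneracy of $Q_t$ (hence local constancy of its signature) through each successive wedge, and verifying that the bookkeeping $n-\sum_{s=0}^{i-1}\lambda_s$ is exactly what is consumed at each step, is the technical heart, the remaining steps being the formal Hodge-theoretic bookkeeping above, essentially as in Li's proof of Theorem~\ref{thm_li}.
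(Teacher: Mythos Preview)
Your proof is correct and follows essentially the same strategy as the paper: both arguments rest on the mixed Lefschetz decomposition of $H^{p,p}(X,\mathbb C)$ coming from Theorem~\ref{thm3}, the sign $(-1)^{s}$ furnished by the Hodge--Riemann relation on each primitive block $P^{s,s}$, and the dimension formula $\dim P^{s,s}=h^{s,s}-h^{s-1,s-1}$ (Remark~\ref{rem_hr}(c)) to convert the sign count into condition~(ii).

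The packaging differs in two minor but pleasant ways. First, the paper decomposes $[\gamma]$ so that the \emph{last} step uses $\alpha$ via Theorem~\ref{thm2}, producing $[\gamma]=(\text{primitive pieces})+\mu\,[\eta^{p-1}\wedge\alpha]$ and then computes both sides of \eqref{kt} directly (Lemmas in Section~\ref{sect_kt}); you instead run the Lefschetz decomposition with $\eta$ throughout and phrase the conclusion as a signature statement for $Q$, handling $v=[\eta^{p-1}\wedge\alpha]$ only through $Q(v,v)>0$. This is a clean abstraction and in fact means your decomposition needs only Theorem~\ref{thm3}; Theorem~\ref{thm2} is not invoked. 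Second, for (i)$\Rightarrow$(ii) the paper exhibits an explicit bad class $[\gamma']=[\eta^{p-1}\wedge\alpha]+[\eta^{p-2d'}]\wedge[\beta]$, while your linear-algebra argument ($\dim W\ge2$ forces $W\cap v^{\perp}\neq0$) reaches the same contradiction nonconstructively. One small correction: your justification of (A) via ``nonnegative on $X$, strictly positive on an open set'' is roundabout---the $k=2$ case of Definition~\ref{m-pos} already gives $\Omega\wedge\eta^{2p-2}\wedge\alpha^2>0$ \emph{pointwise}, so the integral is positive immediately.
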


\begin{rem}
(1) We may point out that the above involved $(1,1)$-forms $\alpha_{ij_i},i\ge2$, $\eta$ and $\alpha$ could be degenerate in some directions, while $\alpha$ could be negative in some directions; moreover, the $(p,p)$-form $\eta^{p-1}\wedge\alpha$ is partially mixed/polarized (compare with \eqref{kt0}).\\
(2) The condition (ii) in Theorem \ref{thm_kt} is satisfied by a variety of examples, see \cite[Example 1.7]{Li13}.
\end{rem}

\begin{exmp}
Let's look at a special case of Theorem \ref{thm_kt} that $p=2$ and $h^{2,2}=h^{1,1}$. Assume there exists a holomorphic submersion $f:X\to Y$ with $Y$ a $4$-dimensional compact K\"ahler manifold. Let $\omega_1,...,\omega_{n-4}$ be K\"ahler metrics on $X$, $\chi$ a K\"ahler metric on $Y$ and $\beta$ a closed real $(1,1)$-form on $Y$ which is $2$-positive with respect to $\chi$ on $Y$. Denote $\Omega:=\omega_1\wedge...\wedge\omega_{n-4}$.
Then for any $[\gamma]\in H^{2,2}(X,\mathbb C)$ we have
\begin{align}
\left(\int_X\Omega\wedge f^*\chi\wedge f^*\beta\wedge\gamma\right)\left(\int_X\Omega\wedge f^*\chi\wedge f^*\beta\wedge\bar\gamma\right)\ge\left(\int_X\Omega\wedge f^*\chi^2\wedge f^*\beta^2\right)\left(\int_X\Omega\wedge\gamma\wedge\bar\gamma\right)\nonumber.
\end{align}
and the equality holds if and only if $[\gamma]$ is proportional to $[f^*\chi\wedge f^*\beta]$.
\end{exmp}

Similar to \cite{Li16}, our Theorem \ref{thm_kt} will be proved by making use of the Hodge-Riemann bilinear relations. To this end, however, we have to first develop the Hodge-Riemann bilinear relations in the corresponding mixed and degenerate setting, which we now intruduce as follows. 

\subsubsection{Hodge index theorem}
Let's begin with a particular piece of Hodge-Riemann bilinear relation, namely, the Hodge index theorem (i.e. Hodge-Riemann bilinear relation on $H^{1,1}$ level).

\begin{defn}(\cite[Section 4]{DN})\label{defn_hi}
For any $[\Omega]\in H^{n-2,n-2}(X,\mathbb R):=H^{n-2,n-2}(X,\mathbb C)\cap H^{2n-4}(X,\mathbb R)$ and $[\eta]\in H^{1,1}(X,\mathbb R)$, we define the \emph{primitive space with respect to $([\Omega],[\eta])$} by
$$P^{1,1}(X,\mathbb C):=\left\{[\gamma]\in H^{1,1}(X,\mathbb C)|[\Omega]\wedge[\eta]\wedge[\gamma]=0\right\}.$$
Then we say \emph{$([\Omega],[\eta])$ satisfies the Hodge index theorem} if the quadratic form
$$Q([\beta],[\gamma]):=\int_X\Omega\wedge\beta\wedge\overline\gamma$$
is negative definite on $P^{1,1}(X,\mathbb C)$. 
\par Let $\mathscr {H}$ be the set of pair $([\Omega],[\eta])\in H^{n-2,n-2}(X,\mathbb R)\times H^{1,1}(X,\mathbb R)$ satisfying the Hodge index theorem.
\end{defn}

A fundamental question is to determine $\mathscr H$.

\par The classical Hodge index theorem (see e.g. \cite[Chapter 6]{V}) equivalently says that $([\omega^{n-2}],[\omega])\in\mathscr H$ for any K\"ahler metric $\omega$. A theorem of Gromov \cite{Gr} (also see \cite[Theorem 1.2]{DN}) extended Hodge index theorem to a mixed setting, proving that $([\omega_1\wedge...\wedge\omega_{n-2}],[\omega_{n-1}])\in\mathscr H$ for any K\"ahler metrics $\omega_1,...,\omega_{n-1}$. More recently, Xiao \cite[Theorem A]{X} further proved that if $\omega$ is a K\"ahler metric and $\alpha_1,...,\alpha_{n-m-1}$ are closed real $(1,1)$-forms such that every $\alpha_j$ is $(n-m)$-positive ($n-m\ge2$) with respect to $\omega$, then $([\omega^{m}\wedge\alpha_1\wedge...\wedge\alpha_{n-m-2}],[\alpha_{n-m-1}])\in\mathscr H$. Moreover, Ross-Toma \cite{RT} proved that certain Schur classes of ample vector bundle are contained in $\mathscr H$.

Comparing Xiao's result \cite{X} with the above-mentioned theorem of Gromov \cite{Gr}, it seems very natural to ask that can we further mix the term $\omega^{m}$ in Xiao's result, e.g. can we replace it by $\omega_1\wedge...\wedge\omega_{m}$? We shall answer this in the following

\begin{thm}\label{thm1}
Fix an integer $m\le n-2$. Assume $\omega_1,...,\omega_{m}$ are K\"ahler metrics on $X$, and $\alpha_1,...,\alpha_{n-m-1}$ closed real $(1,1)$-forms on $X$ such that every $\alpha_j$ is $(n-m)$-positive with respect to $(\omega_1\wedge...\wedge\omega_{m},\omega_X)$. Then 
$$([\omega_1\wedge...\wedge\omega_{m}\wedge\alpha_1\wedge...\wedge\alpha_{n-m-2}],[\alpha_{n-m-1}])\in\mathscr H.$$
\end{thm}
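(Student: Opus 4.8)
The plan is to run the continuity/deformation method pioneered by Gromov and refined by Dinh--Nguy{\^e}n and Xiao, now carried over the joint parameter space of the K\"ahler classes $[\omega_1],\dots,[\omega_m]$ and the $(n-m)$-positive classes $[\alpha_1],\dots,[\alpha_{n-m-1}]$. Concretely, I would consider the set $S\subset (\mathrm{K\ddot ahler\ cone})^m\times(\text{$(n-m)$-positive cone w.r.t. }(\omega_X,\omega_1\wedge\cdots\wedge\omega_m))^{\,n-m-1}$ consisting of all tuples for which the conclusion $([\omega_1\wedge\cdots\wedge\omega_m\wedge\alpha_1\wedge\cdots\wedge\alpha_{n-m-2}],[\alpha_{n-m-1}])\in\mathscr H$ holds, and show $S$ is both open and closed in this (connected) parameter domain. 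The base point is the classical/Gromov case: taking all $\alpha_j=\omega_X$ and all $\omega_i=\omega_X$ gives $([\omega_X^{n-2}],[\omega_X])\in\mathscr H$ by the classical Hodge index theorem, and more generally $([\omega_1\wedge\cdots\wedge\omega_{n-2}],[\omega_{n-1}])\in\mathscr H$ for K\"ahler $\omega_i$ by Gromov's theorem; since the $(n-m)$-positivity cone is convex and contains the K\"ahler cone, the parameter domain is connected and nonempty, so it suffices to establish openness and closedness.

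Openness is essentially the stability of negative definiteness of the quadratic form $Q$ on the primitive space under small perturbations of the defining data, combined with the fact that the primitive space $P^{1,1}$ varies continuously (its dimension $h^{1,1}-1$ is locally constant as long as $[\Omega\wedge\eta]\neq0$, which holds throughout since $\Omega\wedge\eta$ pairs positively with $\eta$ by positivity of the eigenvalue counts). Here I would argue exactly as in \cite{X}: write the restriction of $Q$ to $P^{1,1}$, pick continuously-varying bases, and note that a matrix that is negative definite stays so under small perturbation. Closedness is handled by a limiting argument: along a sequence in $S$ converging to a boundary point, $Q$ remains negative semi-definite on the limiting primitive space by continuity; to upgrade semi-definite to definite one uses that the limiting data still satisfies the Hard Lefschetz property (a)(b) of Remark \ref{rem_cor}, which forces the rank of $Q$ on $P^{1,1}$ to be maximal — equivalently, one checks directly that a primitive class $[\gamma]$ with $Q([\gamma],[\gamma])=0$ at the limit must be $[\Omega]$-harmonic in the appropriate sense and hence zero.

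The main obstacle — and the place where genuinely new input beyond \cite{X} is needed — is the \emph{local/pointwise linear-algebra step} underlying openness and the base of the induction: one must show that at each point of $X$ the mixed wedge $\omega_1\wedge\cdots\wedge\omega_m\wedge\alpha_1\wedge\cdots\wedge\alpha_{n-m-2}$ (with the $\alpha_j$ only $(n-m)$-positive with respect to the \emph{product} $\omega_1\wedge\cdots\wedge\omega_m$ rather than a single power $\omega^m$) still satisfies the pointwise Hodge--Riemann/mixed Hodge index property on $(1,1)$-forms. In Xiao's setting $\omega^m$ could be diagonalized in a single frame, which made the pointwise computation tractable; with distinct K\"ahler metrics $\omega_1,\dots,\omega_m$ one cannot simultaneously diagonalize all of them, so I would instead proceed by induction on $m$: peeling off one factor $\omega_m$ at a time using the classical fact that wedging with a single K\"ahler form preserves the relevant pointwise positivity (a mixed Hard Lefschetz at the level of forms), thereby reducing the $m$-factor statement to the $(m-1)$-factor one and ultimately to Xiao's case $m$ replaced by $m-1$, or to Gromov's fully-mixed case. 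The key lemma to isolate is therefore: if $([\Omega'],[\eta'])\in\mathscr H$ and $\omega$ is K\"ahler, then under the appropriate positivity of the remaining forms $([\omega\wedge\Omega'{}'],[\eta'])\in\mathscr H$ — i.e. one can always absorb an extra K\"ahler factor — which is exactly the inductive bridge between the $(m-1)$- and $m$-factor versions and between Xiao's theorem and Theorem \ref{thm1}. I expect the verification of this bridge, respecting the precise $(n-m)$-positivity hypothesis on the $\alpha_j$, to be the technical heart of the argument.
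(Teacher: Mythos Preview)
Your continuity/deformation outline differs substantially from the paper's argument, and as written it contains a genuine gap.

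The circularity is in your closedness step: you propose to upgrade semi-definiteness to definiteness at the limit by invoking ``the limiting data still satisfies the Hard Lefschetz property (a)(b) of Remark~\ref{rem_cor}'', but in that remark Hard Lefschetz is \emph{derived from} the Hodge index theorem, not established independently; you cannot use it as input to prove the Hodge index theorem. In the Dinh--Nguy\^en continuity scheme this non-degeneracy step is handled precisely by first proving the \emph{pointwise} (linear) statement and using it to show the quadratic form never degenerates along the path --- exactly the step you defer. Your proposal for that pointwise step --- ``induction on $m$, peeling off one factor $\omega_m$'' --- is also not workable as stated: the ambient dimension $n$ is fixed, so removing a K\"ahler factor changes the degree of $\Omega$ rather than the space, and there is no evident mechanism by which an $(m-1)$-factor linear Hodge index on $\mathbb C^n$ implies the $m$-factor one. (Timorin's induction, by contrast, is on $n$ via restriction to hyperplanes, not on $m$.) Your ``bridge lemma'' is left unformulated and unproved.

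The paper bypasses the continuity method entirely and proves the pointwise linear statement directly via G{\aa}rding's theory of hyperbolic polynomials, then globalizes in one stroke. Starting from the determinant $P(x)=x^n$ on $\Lambda^{1,1}_{\mathbb R}(\mathbb C^n)$, which is hyperbolic at any K\"ahler form, Theorem~\ref{thm2.4}(1) gives that the polarized form $P_1(x)=\omega_1\wedge\cdots\wedge\omega_m\wedge x^{n-m}$ is again hyperbolic and complete --- with \emph{no} need to simultaneously diagonalize the $\omega_i$, since each lies in the G{\aa}rding cone of $P$. Lemma~\ref{lem_c} then identifies the G{\aa}rding cone $C(P_1)$ as exactly the cone of forms that are $(n-m)$-positive with respect to $(\omega_X,\omega_1\wedge\cdots\wedge\omega_m)$; this is why that positivity notion is the right hypothesis. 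One more polarization (plugging in $\alpha_1,\dots,\alpha_{n-m-2}$) together with G{\aa}rding's inequality yields the pointwise Hodge index (Lemma~\ref{lem_c3}(2)): if $\Omega\wedge\eta\wedge x=0$ then $\Omega\wedge x^2\le 0$, with equality only for $x=0$. The global statement then follows immediately by solving the Laplacian equation $\Omega\wedge\eta\wedge(\alpha+\sqrt{-1}\partial\bar\partial\phi)=0$ for a representative and integrating the pointwise inequality. Theorem~\ref{thm1} is the special case $N=1$ of the more general Theorem~\ref{thm2} proved this way. No deformation, no closedness argument, no peeling of K\"ahler factors is needed.
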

 
Indeed, we shall prove the following more general Hodge index theorem in a setting related to Theorem \ref{thm_kt}, which contains Theorem \ref{thm1} as a very special case and will be used in the proof of the main result Theorem \ref{thm_kt}.
\begin{thm}\label{thm2}
Assume $\lambda_1,...,\lambda_N,\lambda_{N+1}\in\mathbb Z_{\ge1}$ with $\sum_{i=1}^{N+1}\lambda_s= n-2$. Assume $\alpha_{ij_i}, 1\le i\le N+1$ and $1\le j_i\le\lambda_i+1$, are closed real $(1,1)$-forms on $X$ such that $\alpha_{1j_1}$'s are K\"ahler metrics, and when $i\ge2$, $\alpha_{ij_i}$ is $(n-\sum_{s=1}^{i-1}\lambda_s)$-positive with respect to\\ $(\bigwedge_{1\le s\le i-1,1\le j_s\le \lambda_s}\alpha_{sj_s},\alpha_{i-1,\lambda_{i-1}+1})$. Denote $\Omega:=\bigwedge_{1\le i\le N+1,1\le j_i\le \lambda_i}\alpha_{ij_i}$. Assume $\eta$ is a closed real $(1,1)$-forms on $X$ which is $2$-positive with respect to  $(\Omega,\alpha_{N+1,\lambda_{N+1}+1})$. Then we have
$$([\Omega],[\eta])\in\mathscr H.$$
\end{thm}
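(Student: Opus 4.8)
The plan is to reduce the general mixed statement to a situation handled by Xiao's Hodge index theorem \cite{X} (and its refinement needed here) by a two-stage perturbation/continuity argument, following the structure by which Gromov's mixed Hodge index theorem is usually deduced from the unmixed one. First I would treat the "Kähler part" $\alpha_{1,1}\wedge\cdots\wedge\alpha_{1,\lambda_1}$: since each $\alpha_{1j}$ is a Kähler metric, for $t>0$ small the forms $\alpha_{1j}^{(t)}:=\alpha_{1j}+t\,\omega_X$ are still Kähler and converge to $\alpha_{1j}$, and $Q$ depends continuously on the classes, so by an approximation it suffices to prove the statement when these are replaced by a single Kähler power — more precisely one wants to pass from $\alpha_{1,1}\wedge\cdots\wedge\alpha_{1,\lambda_1}$ to $\omega^{\lambda_1}$ for one fixed Kähler $\omega$. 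The standard device here is the polarization/mixing lemma: the set $\mathscr H$ is closed under the appropriate limits, and the key point is that if $([\,\omega^{\lambda_1}\wedge(\text{rest})\,],[\eta])\in\mathscr H$ for all Kähler $\omega$ in a neighborhood, then by multilinearity and a positivity argument (taking $\omega=\sum c_j\alpha_{1j}$ and extracting the relevant multilinear coefficient, as in \cite{Gr,DN,X2}) one gets the fully mixed class in $\mathscr H$. So the real content is the case $\lambda_1=$ all of the Kähler factors collapsed to $\omega^{m}$ with $m:=\lambda_1$, i.e.\ exactly Theorem \ref{thm1}, together with a further induction on $N$ to absorb the higher blocks $\alpha_{ij}$, $i\ge 2$.

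For Theorem \ref{thm1} itself (the case $N=1$, so $\Omega=\omega_1\wedge\cdots\wedge\omega_m\wedge\alpha_1\wedge\cdots\wedge\alpha_{n-m-2}$ with the $\alpha_j$ $(n-m)$-positive with respect to $(\omega_X,\omega_1\wedge\cdots\wedge\omega_m)$), I would again first collapse $\omega_1\wedge\cdots\wedge\omega_m$ to $\omega^m$ by the mixing argument of the previous paragraph, reducing to Xiao's setting \cite[Theorem A]{X} — but with one caveat: Xiao's positivity hypothesis is $(n-m)$-positivity of each $\alpha_j$ \emph{with respect to $\omega$}, whereas here it is with respect to $(\omega_X,\omega_1\wedge\cdots\wedge\omega_m)$. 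So the technical step is to check that the notion of $(n-m)$-positivity with respect to a mixed background (Definition \ref{m-pos}) is stable under the collapsing $\omega_1\wedge\cdots\wedge\omega_m\rightsquigarrow\omega^m$: one chooses $\omega$ comparable to all the $\omega_j$ and to $\omega_X$, and verifies that $(n-m)$-positivity with respect to the mixed data passes to $(n-m)$-positivity with respect to $\omega$ up to shrinking, using the fact that $(n-m)$-positivity is an open cone condition and that a product of $m$ Kähler metrics dominates (a constant times) $\omega^m$ pointwise after adjusting $\omega$. Once inside Xiao's hypotheses, \cite[Theorem A]{X} gives $([\omega^m\wedge\alpha_1\wedge\cdots\wedge\alpha_{n-m-2}],[\alpha_{n-m-1}])\in\mathscr H$, and then undoing the mixing (the multilinear-coefficient extraction) yields Theorem \ref{thm1}.

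With Theorem \ref{thm1} in hand, Theorem \ref{thm2} follows by induction on $N$. Suppose the result is known for $N-1$ blocks. In the $N$-block situation, the innermost positivity hypotheses say precisely that, after grouping $\beta:=\bigwedge_{i\le N,\,j}\alpha_{ij}$ is of the form "Kähler powers wedge $(n-2)$-ish-positive forms" relative to the background built from the previous blocks, and $\eta$ is $2$-positive with respect to $(\alpha_{N+1,\lambda_{N+1}+1},\Omega)$; one views $\alpha_{N,\lambda_N}$ (say) as playing the role of the Kähler-type background form for the last stage and applies the inductive hypothesis to the $(N-1)$-block data sitting inside, then glues in the last block $\alpha_{N,j}$ via exactly the same stability-of-$(n-\sum\lambda_s)$-positivity argument as in the previous paragraph, reducing to the case covered by Theorem \ref{thm1}. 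The continuity/closedness of $\mathscr H$ under the relevant degenerations (semi-positive limits of classes whose top self-intersection stays positive) is what makes all the perturbations legitimate; this, together with Gårding's theory of hyperbolic polynomials to handle the equality/openness bookkeeping, is invoked throughout.

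The step I expect to be the main obstacle is precisely the compatibility of the various notions of $m$-positivity "with respect to a mixed background" (Definition \ref{m-pos}) with the collapsing and perturbation operations: one must make sure that when the product of Kähler factors is deformed to a single power, or when a block is glued in, the pointwise eigenvalue/positivity count that defines $(n-\sum_{s}\lambda_s)$-positivity is genuinely preserved (not merely generically), and that the degenerate limits still satisfy the nonvanishing of the top intersection number needed for Remark \ref{rem_cor}(1) to apply. Controlling this uniformly in the continuity parameter — rather than the generic-fiber or generic-point statements that suffice in the strictly Kähler case — is the delicate point, and is where I expect the bulk of the technical work (and the need to "slightly extend Xiao's works", as the abstract promises) to lie.
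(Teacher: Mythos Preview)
Your proposal takes a fundamentally different route from the paper, and it contains a genuine gap.

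The paper does not reduce to Xiao's theorem via continuity or polarization. It reproves the statement from scratch at the pointwise level on $\mathbb C^n$ by \emph{iterating} G{\aa}rding's theory. Starting from $P(x)=x^n$ (hyperbolic at any K\"ahler form), one forms $P_1(x)=\alpha_{11}\wedge\cdots\wedge\alpha_{1\lambda_1}\wedge x^{n-\lambda_1}$, identifies its G{\aa}rding cone as exactly the set of forms $(n-\lambda_1)$-positive with respect to $(\alpha_{1,\lambda_1+1},\alpha_{11}\wedge\cdots\wedge\alpha_{1\lambda_1})$ (Lemma~\ref{lem_c}), then polarizes $P_1$ using $\alpha_{2j}\in C(P_1)$ to get $P_2$, and so on up to $P_{N+1}(x)=\Omega\wedge x^2$ (Lemma~\ref{lem_c2}). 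G{\aa}rding's inequality for $P_{N+1}$ gives the pointwise Hodge index inequality (Lemma~\ref{lem_c3}); the global statement then follows by solving the Laplacian $\Omega\wedge\eta\wedge(\alpha+\sqrt{-1}\partial\bar\partial\phi)=0$ to obtain a pointwise-primitive representative and integrating. The nested positivity hypotheses in the theorem are \emph{designed} precisely so that at each stage $\alpha_{ij}$ lies in the G{\aa}rding cone of $P_{i-1}$; this is the whole point of Definition~\ref{m-pos}.

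Your ``multilinear coefficient extraction'' device does not establish membership in $\mathscr H$. The condition ``$Q_\Omega$ is negative definite on $P^{1,1}_{([\Omega],[\eta])}$'' is not a multilinear inequality in $\Omega$: both the quadratic form and the primitive subspace on which it is tested move with $\Omega$. Knowing $([\omega^{\lambda_1}\wedge(\text{rest})],[\eta])\in\mathscr H$ for every K\"ahler $\omega$ yields nothing about the mixed class by expanding $\omega=\sum c_j\alpha_{1j}$; that trick works for scalar positivity statements like Khovanskii--Teissier, not for definiteness on a varying subspace. The continuity arguments in \cite{DN,X2} that do work proceed by first proving non-degeneracy of $Q_{\Omega_t}$ on all of $H^{1,1}$ (Hard Lefschetz) along the path, whence the signature is constant --- but Hard Lefschetz is itself derived from the pointwise statement, so you are back to needing a linear result you have not supplied.

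The compatibility obstacle you flag at the end is also real and is not fixed by the domination you sketch. The G{\aa}rding cone of $x\mapsto\omega_1\wedge\cdots\wedge\omega_m\wedge x^{n-m}$ genuinely depends on the $\omega_j$; a pointwise bound $\omega_1\wedge\cdots\wedge\omega_m\ge c\,\omega^m$ does not transfer the inequalities $\Phi\wedge\omega_X^{n-m-k}\wedge\alpha^k>0$ from $\Phi=\omega_1\wedge\cdots\wedge\omega_m$ to $\Phi=\omega^m$, because $\omega_X^{n-m-k}\wedge\alpha^k$ need not be a positive $(n-m,n-m)$-form. The paper sidesteps this entirely: it never collapses the background, but builds the generalized $m$-positivity notion into the iterative G{\aa}rding scheme from the start.
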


\begin{rem}
We should point out that, in Theorem \ref{thm2}, the case that $\alpha_{11}=...=\alpha_{1,\lambda_1}=:\omega_X$ and all $\alpha_{ij_i}$'s, $2\le i\le N+1$ and $1\le j_i\le\lambda_{i}$, and $\eta$ are $(n-\lambda_1)$-positive with respect to $\omega_X$ is exactly Xiao's result \cite[Theorem A]{X}. Our result here extends it to a more mixed setting and partially weakens the positivity assumption simultaneously, in which we may in particular mention that some factor $\alpha_{ij_i}$ is \emph{no longer} required to have at least $(n-\lambda_1)$ positive eigenvalues, e.g. they may only have $3$ positive eigenvalues, while $\eta$ may only have $2$ positive eigenvalues.
\end{rem}

To prove Theorem \ref{thm2}, we will mainly follow Xiao's strategy in \cite{X}. One of the crucial steps in \cite{X} is applying G{\aa}rding inequality to the elementary symmetric polynomial. In this note, we shall consider some hyperbolic polynomials which are more general than the elementary symmetric polynomial, and then apply G{\aa}rding's theory \emph{iteratively}. Details will be presented in Section \ref{sect_hi}.

\subsubsection{Hodge-Riemann bilinear relations}\label{sect_hr}
Our Theorem \ref{thm2} motivates some general Hodge-Riemann bilinear relations in the mixed settings. Similar to Definition \ref{defn_hi} we first introduce
\begin{defn}(\cite[Section 4]{DN})\label{defn_hr}
For any $[\Omega]\in H^{n-k,n-k}(X,\mathbb R):=H^{n-k,n-k}(X,\mathbb C)\cap H^{2(n-k)}(X,\mathbb R)$, $[\eta]\in H^{1,1}(X,\mathbb R)$, and for $(p,q)$ with $0\le p,q\le p+q\le n$ and $p+q=k$, we define the \emph{primitive space with respect to $([\Omega],[\eta])$} in $H^{p,q}(X,\mathbb C)$ by
$$P^{p,q}(X,\mathbb C)=P^{p,q}_{([\Omega],[\eta])}(X,\mathbb C):=\left\{[\gamma]\in H^{p,q}(X,\mathbb C)|[\Omega]\wedge[\eta]\wedge[\gamma]=0\right\}.$$
Then we say \emph{$([\Omega],[\eta])$ satisfies the Hodge-Riemann bilinear relation on $H^{p,q}(X,\mathbb C)$} if the quadratic form
$$Q([\beta],[\gamma]):=\sqrt{-1}^{q-p}(-1)^{\frac{(p+q)(p+q+1)}{2}}\int_X\Omega\wedge\beta\wedge\overline\gamma$$
is positive definite on $P^{p,q}(X,\mathbb C)$.
\par Let $\mathscr H^{n-k,n-k}$ be the set of pair $([\Omega],[\eta])\in H^{n-k,n-k}(X,\mathbb R)\times H^{1,1}(X,\mathbb R)$ satisfying the Hodge-Riemann bilinear relation on $H^{p,q}(X,\mathbb C)$ for any $(p,q)$ with $p+q=k$.
\end{defn}

\par The classical Hodge-Riemann bilinear relation (see e.g. \cite[Chapter 6]{V}) equivalently says $([\omega^{n-k}],[\omega])\in\mathscr H^{n-k,n-k}$ for any K\"ahler metric $\omega$. A theorem of Dinh-Nguy\^en \cite[Theorem A]{DN} and Cattani \cite{C} extended Hodge-Riemann bilinear relation to a mixed setting, proving that $([\omega_1\wedge...\wedge\omega_{n-k}],[\omega_{n-k+1}])\in\mathscr H^{n-k,n-k}$ for any K\"ahler metrics $\omega_1,...,\omega_{n-k+1}$ (also see \cite{Gr,T} for some special cases). More recently, Xiao \cite[Theorem A, Remarks 2.2 and 3.9]{X2} further proved that for any fixed positive integer $m$ with $k\le m\le n$, if $\omega_1,...,\omega_{n-m}$ are K\"ahler metrics on $X$ and $\alpha_1,...,\alpha_{m-k+1}$ are closed real $(1,1)$-forms on $X$ such that every $\alpha_j$ is semi-positive and of at least $m$ positive eigenvalues, then $([\omega_1\wedge...\wedge\omega_{n-m}\wedge\alpha_1\wedge...\wedge\alpha_{m-k}],[\alpha_{m-k+1}])\in\mathscr H^{n-k,n-k}$. 

Motivated by Theorem \ref{thm2}, we shall extend Xiao's result \cite[Theorem A, Remarks 2.2 and 3.9]{X2} to the following

\begin{thm}\label{thm3}
Given $N\in\mathbb Z_{\ge0}$. Assume $\lambda_0=0$ and $\lambda_1,...,\lambda_N\in\mathbb Z_{\ge1}$ with $\sum_{i=0}^N\lambda_i=n-(p+q)$, $p,q\in\mathbb Z_{\ge1}$. For $1\le i\le N$, assume $\alpha_{ij_i}$, $1\le j_i\le\lambda_i$, are semi-positive closed real $(1,1)$-forms on $X$ such that $\alpha_{ij_i}$ has at least $(n-\sum_{s=0}^{i-1}\lambda_s)$ positive eigenvalues. Denote $\Omega:=\bigwedge_{1\le i\le N,1\le j_i\le \lambda_i}\alpha_{ij_i}$ if $N\ge1$ and $\Omega=1\in\mathbb R$ if $N=0$. Assume $\eta_1,...,\eta_{p+q-2},\eta_{p+q-1}$ are semi-positive closed real $(1,1)$-forms on $X$ of at least $(n-\sum_{s=0}^{N}\lambda_s)$ positive eigenvalues (note that $p+q-2=n-2-\sum_{i=1}^N\lambda_s$). Then for $0\le d\le[(p+q)/2]-1$,
$$([\Omega\wedge\eta_1\wedge...\wedge\eta_{2d}],[\eta_{2d+1}])\in\mathscr H^{n-p-q+2d,n-p-q+2d}.$$
\end{thm}

\begin{rem}\label{rem_N=0}
Note that in Theorem \ref{thm3}, the $N=0$ case is exactly the mixed Hodge-Riemann bilinear relations of Dinh-Nguy\^en \cite[Theorem A]{DN} and Cattani \cite{C}, as in this case $\Omega$ is the constant $1\in\mathbb R$ and $\eta_i$'s are all K\"ahler, while the $N=1$ case is Xiao's result \cite[Theorem A, Remarks 2.2 and 3.9]{X2}.
\end{rem}

\begin{rem}\label{rem_hr}
Consequently, by standard arguments (see e.g. \cite{DN,DN13,T,X2}) we also have the Hard Lefschetz Theorem and Lefschetz Decomposition Theorem (with respect to $([\Omega\wedge\eta_1\wedge...\wedge\eta_{2d}],[\eta_{2d+1}])$) as follows.
\par Assume the same setting and notations as in Theorem \ref{thm3}, then
\begin{itemize}
\item[(a)] The map $[\Omega\wedge\eta_1\wedge...\wedge\eta_{2d}]:H^{p-d,q-d}(X,\mathbb C)\to H^{n-q+d,n-p+d}(X,\mathbb C)$ is an isomorphism;
\item[(b)] The space $H^{p-d,q-d}(X,\mathbb C)$ has a $Q$-orthogonal direct sum decomposition
$$H^{p-d,q-d}(X,\mathbb C)=P^{p-d,q-d}(X,\mathbb C)\oplus[\eta_{2d+1}]\wedge H^{p-d-1,q-d-1}(X,\mathbb C).$$
\item[(c)] $dim P^{p-d,q-d}(X,\mathbb C)=h^{p-d,q-d}-h^{p-d-1,q-d-1}$.
\end{itemize}
\end{rem}

\begin{rem}
In Theorem \ref{thm3}, the case that $p=q=1$ and $d=0$ gives a Hodge index theorem, which is \emph{weaker} than the one proved in Theorem \ref{thm2}, as being $k$-positive (in the general sense of Definition \ref{m-pos}) is more general than being semi-positive and of at least $k$ positive eigenvalues. The better one in Theorem \ref{thm2} is the reason that the $\alpha$ in the main Theorem \ref{thm_kt} can be assumed to be only $2$-positive.
\end{rem}

\section{Hodge index Theorem: Proof of Theorem \ref{thm2}}\label{sect_hi}

\subsection{Hyperbolic polynomials} For the proof of Theorem \ref{thm2}, we first collect some useful properties of hyperbolic polynomials, closely following \cite{Ga}, \cite[Chapter 2]{H} and \cite[Section 2.1]{X}.
\par Let $V$ be an $N$-dimensional complex vector space and $P=P(x)$ a homogeneous polynomial of degree $n$ on $V$. For a real vector $a\in V$ we say that $P$ is \emph{hyperbolic at $a$} if the equation $P(sa+x)=0$ has $n$ real zeroes for every real $x\in V$. In particular, $P(a)\neq0$ and the polynomial $P(x)/P(a)$ is real when $x$ is real. Then we may without loss of generality assume that $P$ is a real polynomial. We say $P$ is \emph{complete} if the condition $P(sx+y)=P(y)$ for all $s,y$ implies $x=0$. If $P$ is hyperbolic at $a$, let $C(P,a)$ be the set of all $x$ such that $P(sa+x)\neq0$ when $s\ge0$, and we call it the \emph{G{\aa}rding cone} of $P$.
\begin{thm}\cite{Ga,H}\label{thm2.3}
If $P$ is hyperbolic at $a$, then\\
(1) $C(P,a)$ is an open convex cone;\\
(2) For any $b\in C(P,a)$,  $P$ is hyperbolic at $b$ and $C(P,b)=C(P,a)$. In this case we denote $C(P)=C(P,a)$;\\
(3) $P(x)/P(a)>0$ on $C(P,a)$ and $\left(P(x)/P(a)\right)^{1/n}$ is concave on $C(P,a)$.
\end{thm}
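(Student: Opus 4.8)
\emph{Proof idea.} This is G{\aa}rding's classical theorem on hyperbolic polynomials, and I would argue along G{\aa}rding's original lines. After dividing by $P(a)$ I may assume $P$ is real with $P(a)=1$. For real $x$, hyperbolicity at $a$ provides the factorization
\[P(x+ta)=\prod_{j=1}^{n}\bigl(t+\lambda_j(x)\bigr),\qquad \lambda_j(x)\in\mathbb R ,\]
and unwinding the definition of the G{\aa}rding cone shows $C(P,a)=\{x:\lambda_j(x)>0\ \text{for all}\ j\}$. Several elementary facts follow immediately: each $\lambda_j$ is positively homogeneous of degree one (the roots scale); $P(a+ta)=(1+t)^n$ forces $\lambda_j(a)=1$, so $a\in C(P,a)$; $C(P,a)$ is open, by continuity of the (bounded, nonpositive) roots of $s\mapsto P(x+sa)$ in $x$; and $C(P,a)$ is star-shaped about $a$, since for $x\in C(P,a)$, $\theta\in[0,1)$ and $s\ge0$ one computes $P\bigl((\theta a+(1-\theta)x)+sa\bigr)=(1-\theta)^nP\bigl(\tfrac{\theta+s}{1-\theta}a+x\bigr)\neq0$. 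In particular $C(P,a)$ is connected; and since $x\in C(P,a)$ forces $P(x)=\prod_j\lambda_j(x)\neq0$ (put $s=0$), we get $P(x)/P(a)=\prod_j\lambda_j(x)>0$ throughout $C(P,a)$, which establishes the first assertion in (3).

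The crux, and the step I expect to be the main obstacle, is the rest of (1) together with all of (2): that $P$ stays hyperbolic at every $b\in C(P,a)$, that $C(P,b)=C(P,a)$, and hence that $C(P,a)$ is convex. I would prove this by the classical continuity-of-roots argument. One starts from the trivial remark that, for $x$ real and $z\in\mathbb C\setminus\mathbb R$, $P(x+za)=\prod_j\bigl(z+\lambda_j(x)\bigr)\neq0$ since each factor has nonzero imaginary part; rescaling, $P(w+i\epsilon a)\neq0$ for every real $w$ and every $\epsilon\neq0$. Consequently, for $b\in C(P,a)$, real $x$ and $\epsilon>0$, the polynomials $z\mapsto P(x+zb\pm i\epsilon a)$ have no real zero; deforming the base direction along the segment from $a$ to $b$ — which remains inside $C(P,a)$ by star-shapedness — and following the zeros through this homotopy, they never cross the real axis, so the zeros of $z\mapsto P(x+zb+i\epsilon a)$ stay in one open half-plane and those of $z\mapsto P(x+zb-i\epsilon a)$ in the other (as is seen by evaluating at the $a$-end), and letting $\epsilon\downarrow0$ traps the zeros of $z\mapsto P(x+zb)$ on the real axis. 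Thus $P$ is hyperbolic at $b$. The same circle of ideas, run between two interior directions, gives $C(P,b)=C(P,a)$; granting this, convexity is formal, since for $b,c\in C(P,a)=C(P,b)$ one has $[b,c]\subset C(P,b)=C(P,a)$ by star-shapedness of $C(P,b)$ about $b$. This also legitimizes the notation $C(P):=C(P,a)$.

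It remains to prove the concavity assertion in (3). Given $x,y\in C(P,a)$, part (2) tells us $P$ is hyperbolic at $y$ and $x\in C(P,a)=C(P,y)$, so the factorization at $y$ reads $P(x+ty)=P(y)\prod_{j=1}^{n}(t+\mu_j)$ with all $\mu_j>0$ and $P(y)/P(a)>0$; hence
\[\bigl(P(x+ty)/P(a)\bigr)^{1/n}=\bigl(P(y)/P(a)\bigr)^{1/n}\left(\prod_{j=1}^{n}(t+\mu_j)\right)^{1/n}\]
is a positive constant times the geometric mean of the positive affine functions $t\mapsto t+\mu_j$, hence concave in $t$ on $[0,\infty)$. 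Put $F:=(P/P(a))^{1/n}$, which is continuous, positive and positively homogeneous of degree one on the convex cone $C(P,a)$, and set $g(t):=F(x+ty)$. Concavity of $g$ on $[0,\infty)$ makes $t\mapsto\bigl(g(t)-g(0)\bigr)/t$ non-increasing, while $g(t)/t=F(x/t+y)\to F(y)$ as $t\to\infty$; therefore $\bigl(g(t)-g(0)\bigr)/t\ge F(y)$ for all $t>0$, and in particular $F(x+y)-F(x)=g(1)-g(0)\ge F(y)$. This superadditivity together with degree-one homogeneity yields $F(\theta u+(1-\theta)v)\ge\theta F(u)+(1-\theta)F(v)$ for $u,v\in C(P,a)$ and $\theta\in[0,1]$, i.e.\ the concavity of $F$ on $C(P,a)$, which completes the proof of (3).
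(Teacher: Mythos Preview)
The paper does not prove this theorem; it is quoted without proof as a classical result of G{\aa}rding, with the surrounding text citing \cite{Ga}, \cite[Chapter 2]{H} and \cite[Section 2.1]{X} for the properties of hyperbolic polynomials. Your proposal is a correct outline of precisely G{\aa}rding's original argument---the root factorization and characterization $C(P,a)=\{\lambda_j>0\}$, star-shapedness about $a$, the continuity-of-roots homotopy (deforming the direction from $a$ to $b$ while the $i\epsilon a$-perturbation keeps the zeros off the real axis) to propagate hyperbolicity and identify $C(P,b)=C(P,a)$, and finally concavity via the one-variable geometric-mean computation combined with superadditivity and degree-one homogeneity---so you are simply supplying the proof the paper cites.
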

Let $\tilde P(x_1,...,x_n)$ be the completely polarized form of $P$, which can be explicitly obtained by differentiation:
$$\tilde P(x_1,...,x_n)=\frac{1}{n!}\prod_{k=1}^n\left(\sum_{j=1}^Nx^j_k\frac{\partial}{\partial x^j}\right)P(x),$$
where $x=(x^1,...,x^N)$ and $x_k=(x_k^1,...,x_k^N)$.

\begin{thm}\cite{Ga,H}\label{thm2.4}
If $P$ is hyperbolic at $a$ with $P(a)>0$ and complete, then \\
(1) For any $2\le m\le n-1$ and any $b_1,...,b_{n-m}\in C(P,a)$, $P_{m}(x)=\tilde P(b_1,...,b_{n-m},x,...,x)$ is hyperbolic at $a$ and complete, and $C(P,a)\subset C(P_{m},a)$. In particular, $\tilde P(x_1,x_2,...,x_n)>0$ for any $x_1,...,x_{n-1}\in C(P,a)$ and $x_n\in \overline{C(P,a)}\setminus\{0\}$;\\
(2) $\tilde P(x_1,x_2,...,x_n)\ge P(x_1)^{1/n}\cdot...\cdot P(x_n)^{1/n}$ for any $x_1,...,x_n\in C(P,a)$, and the equality holds if and only if $x_j$'s are pairwise proportional.
\end{thm}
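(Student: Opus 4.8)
\emph{Sketch of the proof.} The plan is to reduce Theorem~\ref{thm2.4} to two one-step statements about directional derivatives of hyperbolic polynomials and then iterate. Writing $\partial_b R$ for the directional derivative of $R$ along $b$, one has $P_m=\frac{m!}{n!}\,\partial_{b_1}\cdots\partial_{b_{n-m}}P$, so $P_m$ is, up to this positive constant, an iterated directional derivative of $P$ in directions lying in $C(P,a)$. It therefore suffices to establish: \textbf{(A)} if $R$ is hyperbolic at $a$ with $R(a)>0$ and $b\in C(R,a)$, then $\partial_b R$ is hyperbolic at $a$, $(\partial_b R)(a)>0$, and $C(R,a)\subset C(\partial_b R,a)$; and \textbf{(B)} if, in addition, $R$ is complete and $\deg R\ge 3$, then $\partial_b R$ is complete as well. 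For \textbf{(A)} one argues in the classical way: by Theorem~\ref{thm2.3}(2), $R$ is hyperbolic at $b$ and $C(R,b)=C(R,a)$; for every real $\xi$ the polynomial $s\mapsto R(sb+\xi)$ has only real roots, so by Rolle's theorem $s\mapsto(\partial_b R)(sb+\xi)$ has only real roots, interlacing the former; when $\xi\in C(R,b)$ all roots of $R(sb+\xi)$ are negative, hence so are those of $(\partial_b R)(sb+\xi)$, giving $\xi\in C(\partial_b R,b)$; as $a\in C(R,b)\subset C(\partial_b R,b)$ we conclude $C(\partial_b R,b)=C(\partial_b R,a)$ by Theorem~\ref{thm2.3}(2). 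Positivity of $(\partial_b R)(a)$ follows from Theorem~\ref{thm2.3}(3): the function $t\mapsto R(a+tb)^{1/\deg R}$ is concave, positive, and tends to $+\infty$ on $[0,\infty)$, hence is increasing.

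The substantive point is \textbf{(B)}, and it is precisely here that the hypothesis $m\ge 2$ is used, to guarantee that one never differentiates a polynomial of degree $<3$ in the iteration. To prove \textbf{(B)}, suppose $\partial_v\partial_b R\equiv0$ for some $v\ne0$. Since the mixed partial $\partial_s\partial_t\big[R(x+sb+tv)\big]=(\partial_v\partial_b R)(x+sb+tv)$ vanishes identically, $R(x+sb+tv)=R(x+sb)+R(x+tv)-R(x)$ for all $x,s,t$; with $x=0$ this reads $R(sb+tv)=s^{d}R(b)+t^{d}R(v)$, where $d:=\deg R$. As $b\in C(R,a)$, $R$ is hyperbolic at $b$, so $s\mapsto s^{d}R(b)+t^{d}R(v)$ has $d$ real roots for each fixed $t$; since $d\ge 3$ and $R(b)>0$, this is impossible unless $R(v)=0$, whence $R(b+tv)\equiv R(b)>0$. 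Now let $I:=\{t\in\mathbb{R}:\ b+tv\in C(R,a)\}$, an open interval containing $0$. If $I$ had a finite endpoint $t_{*}$, then $x_{*}:=b+t_{*}v$ would lie on $\partial C(R,a)$, on which $R$ vanishes (there the largest real root of $s\mapsto R(sa+x)$ equals $0$, so $R(x)=0$), contradicting $R(x_{*})=R(b)>0$. If instead $I=\mathbb{R}$, then $\pm v\in\overline{C(R,a)}$, so $c+\mathbb{R}v\subset C(R,a)$ for every $c\in C(R,a)$; along such a line the concave, nonnegative function $t\mapsto R(c+tv)^{1/d}$ must be constant, so $R(c+tv)\equiv R(c)$ on the nonempty open set $C(R,a)$ and hence $\partial_v R\equiv0$, forcing $v=0$ by completeness of $R$ --- a contradiction. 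This proves \textbf{(B)}.

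Granting \textbf{(A)} and \textbf{(B)}, part~(1) follows by descending induction: with $P_{k}:=\frac{k!}{n!}\partial_{b_1}\cdots\partial_{b_{n-k}}P$, one checks for $k=n,n-1,\dots,m$ that $P_k$ is hyperbolic at $a$, complete, $P_k(a)>0$, and $C(P,a)\subset C(P_k,a)$, applying \textbf{(A)} and \textbf{(B)} to the passage $P_{k+1}\rightsquigarrow P_k$ (legitimate since $b_{n-k}\in C(P,a)\subset C(P_{k+1},a)$ and $\deg P_{k+1}=k+1\ge m+1\ge 3$). The last assertion of part~(1) is the case $m=2$: there $P_2(y)=\tilde P(b_1,\dots,b_{n-2},y,y)$ is a \emph{complete} hyperbolic quadratic with $P_2(a)>0$, i.e.\ a non-degenerate quadratic form of Lorentz signature whose forward cone $C(P_2,a)$ contains $C(P,a)$; its polar bilinear form is $(u,v)\mapsto\tilde P(b_1,\dots,b_{n-2},u,v)$, so the reverse Cauchy--Schwarz inequality for Lorentzian forms yields $\tilde P(x_1,\dots,x_n)>0$ whenever $x_1,\dots,x_{n-1}\in C(P,a)$ (so that $x_{n-1}$ is timelike) and $x_n\in\overline{C(P,a)}\setminus\{0\}$ (causal).

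For part~(2), fix $A=(x_1,\dots,x_{n-2})\in C(P,a)^{n-2}$ and apply the above to the complete Lorentzian quadratic $y\mapsto\tilde P(A,y,y)$; the equality case of reverse Cauchy--Schwarz gives, for $u,v\in C(P,a)$, that $\tilde P(A,u,v)^{2}\ge\tilde P(A,u,u)\,\tilde P(A,v,v)$ with equality if and only if $u\parallel v$. The inequality $\tilde P(x_1,\dots,x_n)\ge\prod_{j=1}^{n}P(x_j)^{1/n}$ together with its equality characterization then follows by iterating this pairwise inequality, exactly as in the standard deduction of Alexandrov--Fenchel / Khovanskii--Teissier type inequalities (and their equality cases) from the associated quadratic form; see \cite{Gr,X}. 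I expect the main difficulty to be statement \textbf{(B)} --- propagating \emph{completeness}, not merely hyperbolicity, under directional differentiation; once it is in place the rest is the classical G{\aa}rding machinery, the only further point requiring care being the equality analysis in part~(2).
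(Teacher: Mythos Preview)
The paper does not prove Theorem~\ref{thm2.4}; it is quoted as background from G{\aa}rding's classical theory, with the sentence ``Next we collect some properties of hyperbolic polynomials, closely following \cite{Ga}, \cite[Chapter 2]{H} and \cite[Section 2.1]{X}'' immediately preceding Theorems~\ref{thm2.3} and~\ref{thm2.4}. There is thus no in-paper argument to compare your sketch against.

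That said, your sketch is correct and essentially reconstructs the proof found in those references. The reduction to the one-step statements \textbf{(A)} and \textbf{(B)} and the Rolle/interlacing argument for \textbf{(A)} are standard. Your argument for \textbf{(B)}---the nontrivial point, as you rightly anticipated---is sound: from $\partial_v\partial_bR\equiv0$ you correctly obtain $R(sb+tv)=s^dR(b)+t^dR(v)$, and for $d\ge3$ hyperbolicity at $b$ forces $R(v)=0$ (since $s^d=\text{const}\neq0$ has fewer than $d$ real roots); the dichotomy on the interval $I$ then gives the contradiction, using that $R$ vanishes on $\partial C(R,a)\setminus\{0\}$ in the finite-endpoint case, and that a concave function bounded below on all of $\mathbb{R}$ must be constant in the case $I=\mathbb{R}$. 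The Lorentzian interpretation of $P_2$ for the ``in particular'' clause and the iteration of reverse Cauchy--Schwarz for part~(2) are likewise the classical route; only the equality characterization in~(2) is left somewhat implicit, but this is routine once the quadratic (Lorentzian) equality case is in hand.
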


\subsection{Proof}\label{hi_pf}
We now prove Theorem \ref{thm2}. As in \cite{X} and \cite{T,DN}, we shall first deal with the linear setting on $\mathbb C^n$. However, unlike \cite{X} (in which the discussions began with the elementary symmetric polynomial $\sigma_m$), here we will begin with the determinant function defined on $n\times n$ Hermitian matrices (see e.g. \cite[Example 4]{Ga}), which provides us some more freedoms to mix the arguments involved. 

That is equivalent to consider the real $(1,1)$-forms of constant coefficients on $\mathbb C^n$ which we do now. Precisely, if we denote $\Lambda^{1,1}_{\mathbb R}(\mathbb C^n)$ be the space of real $(1,1)$-forms of constant coefficients on $\mathbb C^n$, then for any $x\in\Lambda^{1,1}_{\mathbb R}(\mathbb C^n)$ we define
$$P(x):=x^n=x\wedge...\wedge x,$$
which is a homogenous polynomial of degree $n$ and is hyperbolic at any positive real $(1,1)$-forms in $\Lambda^{1,1}_{\mathbb R}(\mathbb C^n)$ and is complete. Precisely, for any fixed positive real $(1,1)$-forms $\omega\in\Lambda^{1,1}_{\mathbb R}(\mathbb C^n)$, the cone $C(P,\omega)$ consists of all positive real $(1,1)$-forms in $\Lambda^{1,1}_{\mathbb R}(\mathbb C^n)$. Note that the completely polarized form of $P$ is given by
$$\tilde P(x_1,...,x_n):=x_1\wedge...\wedge x_n$$
for any $x_1,...,x_n\in\Lambda^{1,1}_{\mathbb R}(\mathbb C^n)$. Fix a K\"ahler metric $\omega_0\in\Lambda^{1,1}_{\mathbb R}(\mathbb C^n)$. Then for any fixed integer $\lambda_1\le n-2$ and any K\"ahler metrics $\alpha_{11},...,\alpha_{1\lambda_1}\in\Lambda^{1,1}_{\mathbb R}(\mathbb C^n)$, the polynomial 
\begin{equation}\label{p_m}
P_{1}(x):=\tilde P(\alpha_{11},...,\alpha_{1\lambda_1},x,...,x)=\alpha_{11}\wedge...\wedge\alpha_{1\lambda_1}\wedge x^{n-\lambda_1}
\end{equation}
is hyperbolic at $\omega_0$ and complete, thanks to Theorem \ref{thm2.4}(1). 

Denote $\alpha_{1,\lambda_1+1}:=\omega_0$. Next we need to understand the cone $C(P_{1},\alpha_{1,\lambda_1+1})$. By Theorem \ref{thm2.4}(1) we know that $C(P,\alpha_{1,\lambda_1+1})\subset C(P_{1},\alpha_{1,\lambda_1+1})$, i.e. $C(P_{1},\alpha_{1,\lambda_1+1})$ contains all positive real $(1,1)$-forms in $\Lambda^{1,1}_{\mathbb R}(\mathbb C^n)$. More generally, we have
\begin{lem}\label{lem_c}
$C(P_{1},\alpha_{1,\lambda_1+1})=\{x\in\Lambda^{1,1}_{\mathbb R}(\mathbb C^n)|$$x$ is $(n-\lambda_1)$-positive with respect to $(\alpha_{11}\wedge...\wedge\alpha_{1\lambda_1},\alpha_{1,\lambda_1+1})\}$.
\end{lem}
\begin{proof}
For convenience, set $C_1:=\{x\in\Lambda^{1,1}_{\mathbb R}(\mathbb C^n)|$$x$ is $(n-\lambda_1)$-positive with respect to $(\alpha_{1,\lambda_1+1},\alpha_{11}\wedge...\wedge\alpha_{1\lambda_1})\}$, and $\Phi:=\alpha_{11}\wedge...\wedge\alpha_{1\lambda_1}$. In this case, since $P_{1}(\alpha_{1,\lambda_1+1})>0$, it is easy to see that $C(P_{1},\alpha_{1,\lambda_1+1})$ consists of all $x\in\Lambda^{1,1}_{\mathbb R}(\mathbb C^n)$ such that $P_1(s\alpha_{1,\lambda_1+1}+x)>0$ whenever $s\ge0$. On the other hand, by definition \eqref{p_m},
\begin{align}\label{eq_p_1}
P_1(s\alpha_{1,\lambda_1+1}+x)&=\Phi\wedge(s\alpha_{1,\lambda_1+1}+x)^{n-\lambda_1}\nonumber\\
&=\sum_{k=0}^{n-\lambda_1}\left(\binom{n-\lambda_1}{k}\Phi\wedge\alpha_{1,\lambda_1+1}^{n-\lambda_1-k}\wedge x^k\right)s^{n-\lambda_1-k}\nonumber\\
&=\sum_{k=0}^{n-\lambda_1}\mu_{k}\cdot s^{n-\lambda_1-k},
\end{align}
where we have denoted
\begin{equation}\label{eq_mu_k}
\mu_{k}:=\binom{n-\lambda_1}{k}\Phi\wedge\alpha_{1,\lambda_1+1}^{n-\lambda_1-k}\wedge x^k.
\end{equation}
Using \eqref{eq_p_1} one easily sees that if $x\in C_1$, i.e. $x$ is $(n-\lambda_1)$-positive with respect to $(\alpha_{1,\lambda_1+1},\Phi)$, then every coefficient $\mu_{k}$, $0\le k\le n-\lambda_1$, of $P_1(s\alpha_{1,\lambda_1+1}+x)$ (as a polynomial of $s$) is positive and hence $P_1(s\alpha_{1,\lambda_1+1}+x)>0$ whenever $s\ge0$, i.e. $x\in C(P_{1},\alpha_{1,\lambda_1+1})$. This proves $C_1\subset C(P_{1},\alpha_{1,\lambda_1+1})$.

To see the converse, we assume $x\in C(P_{1},\alpha_{1,\lambda_1+1})$, i.e. 
$$h(s):=P_1(s\alpha_{1,\lambda_1+1}+x)=\sum_{k=0}^{n-\lambda_1}\mu_{k}\cdot s^{n-\lambda_1-k}$$ 
has $(n-\lambda_1)$ negative zeroes. In particular, $\mu_{n-\lambda_1}=h(0)>0$. To proceed we take the derivative of $h(s)$,
$$h'(s)=\sum_{k=0}^{n-\lambda_1-1}(n-\lambda_1-k)\mu_{k}\cdot s^{n-\lambda_1-1-k},$$
which by Rolle's Theorem has $(n-\lambda_1-1)$ negative zeroes. In particular, $\mu_{n-\lambda_1-1}=h'(0)>0$. Iterating this argument, we finally conclude that $\mu_k>0$ for every $1\le k\le n-\lambda_1$, which, by the definition of $\mu_k$ in \eqref{eq_mu_k}, means $x\in C_1$. This proves $ C(P_{1},\alpha_{1,\lambda_1+1})\subset C_1$.
\end{proof}

\begin{rem}
For any other K\"ahler metric $\tilde\omega$ on $X$, by Theorem \ref{thm2.3}(2) we know $C(P_1,\tilde\omega)=C(P_1,\alpha_{1,\lambda_1+1})$. Therefore, as a consequence of  Lemma \ref{lem_c}, $x\in\Lambda^{1,1}_{\mathbb R}(\mathbb C^n)$ is $(n-\lambda_1)$-positive with respect to $(\alpha_{11}\wedge...\wedge\alpha_{1\lambda_1},\alpha_{1,\lambda_1+1})$ if and only if $x$ is $(n-\lambda_1)$-positive with respect to $(\alpha_{11}\wedge...\wedge\alpha_{1\lambda_1},\tilde\omega)$.
\end{rem}

Given Lemma \ref{lem_c}, we may apply Theorem \ref{thm2.4}(1) one more time to conclude that, for any real $(1,1)$-forms $\alpha_{21},\alpha_{22},...,\alpha_{2\lambda_2},\alpha_{2,\lambda_2+1}\in\Lambda^{1,1}_{\mathbb R}(\mathbb C^n)$ such that every $\alpha_{2j}$ is $(n-\lambda_1)$-positive with respect to $(\alpha_{11}\wedge...\wedge\alpha_{1\lambda_1},\alpha_{1,\lambda_1+1})$, the polynomial
\begin{align}\label{p_md}
P_{2}(x):&=\tilde P_1(\alpha_{21},\alpha_{22},...,\alpha_{2\lambda_2},x,...,x)\nonumber\\
&=\alpha_{11}\wedge...\wedge\alpha_{1\lambda_1}\wedge\alpha_{21}\wedge...\wedge\alpha_{2\lambda_2}\wedge x^{n-\lambda_1-\lambda_2}
\end{align}
is hyperbolic at $\alpha_{2,\lambda_2+1}$ and complete.\\

Similar to Lemma \ref{lem_c}, we have
\begin{lem}\label{lem_c1}
$C(P_{2},\alpha_{2,\lambda_2+1})=\{x\in\Lambda^{1,1}_{\mathbb R}(\mathbb C^n)|$$x$ is $(n-\lambda_1-\lambda_2)$-positive with respect to $(\alpha_{11}\wedge...\wedge\alpha_{1\lambda_1}\wedge\alpha_{21}\wedge...\wedge\alpha_{2\lambda_2},\alpha_{2,\lambda_2+1})\}$.
\end{lem}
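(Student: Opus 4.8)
The plan is to repeat the proof of Lemma~\ref{lem_c} essentially verbatim, with $P_2$ playing the role of $P_1$ and $\alpha_{2,\lambda_2+1}$ the role of $\alpha_{1,\lambda_1+1}$; the generalized $m$-positivity of Definition~\ref{m-pos} was introduced precisely so that the same argument survives each step of the iteration. I would set $M:=n-\lambda_1-\lambda_2$ and $\Psi:=\alpha_{11}\wedge\cdots\wedge\alpha_{1\lambda_1}\wedge\alpha_{21}\wedge\cdots\wedge\alpha_{2\lambda_2}$, so that $P_2(x)=\Psi\wedge x^{M}$ by \eqref{p_md}, and first record the two facts that make the statement meaningful: that $\Psi$ is a strictly positive $(\lambda_1+\lambda_2,\lambda_1+\lambda_2)$-form (so that Definition~\ref{m-pos} applies to $(\alpha_{2,\lambda_2+1},\Psi)$ and $C_2$ is well defined), and that $\nu_0:=P_2(\alpha_{2,\lambda_2+1})=\Psi\wedge\alpha_{2,\lambda_2+1}^{M}>0$. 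Both are inherited from the preceding construction: by Lemma~\ref{lem_c} the standing hypothesis on $\alpha_{21},\dots,\alpha_{2,\lambda_2+1}$ says exactly that all of them lie in $C_1=C(P_1,\alpha_{1,\lambda_1+1})$, whence $\nu_0>0$ is the positivity assertion at the end of Theorem~\ref{thm2.4}(1) applied to $P_1$ (whose G{\aa}rding cone is $C_1$), and strict positivity of $\Psi$ is part of the standing setup of Theorem~\ref{thm2} (the accumulated wedge product is exactly the form against which Definition~\ref{m-pos} is applied at the next stage) or is obtained the same way after restricting to coordinate subspaces.

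With these in hand the argument is the two-inclusion scheme of Lemma~\ref{lem_c}. Since $P_2$ is hyperbolic at $\alpha_{2,\lambda_2+1}$ and complete (established just above the lemma via Theorem~\ref{thm2.4}(1)) and $\nu_0>0$, the cone $C(P_2,\alpha_{2,\lambda_2+1})$ is exactly the set of $x\in\Lambda^{1,1}_{\mathbb R}(\mathbb C^n)$ with $P_2(s\alpha_{2,\lambda_2+1}+x)>0$ for all $s\ge0$. Expanding,
\[
P_2(s\alpha_{2,\lambda_2+1}+x)=\sum_{k=0}^{M}\nu_k\,s^{M-k},\qquad \nu_k:=\binom{M}{k}\,\Psi\wedge\alpha_{2,\lambda_2+1}^{M-k}\wedge x^{k}.
\]
For $C_2\subset C(P_2,\alpha_{2,\lambda_2+1})$: if $x\in C_2$ then $\nu_k>0$ for $1\le k\le M$ by Definition~\ref{m-pos} and $\nu_0>0$ as above, so the polynomial above is positive for every $s\ge0$. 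For the reverse inclusion: if $x\in C(P_2,\alpha_{2,\lambda_2+1})$ then $h(s):=P_2(s\alpha_{2,\lambda_2+1}+x)$ has $M$ real roots, none of them $\ge0$, hence all negative; its leading coefficient $\nu_0$ is positive, so $h(0)=\nu_M>0$, and differentiating $j$ times and applying Rolle's theorem shows $h^{(j)}$ has $M-j$ negative roots and positive leading coefficient, so $h^{(j)}(0)=j!\,\nu_{M-j}>0$ for all $0\le j\le M$; hence $\nu_k>0$ for $1\le k\le M$, which by the definition of $\nu_k$ says precisely $x\in C_2$. This closes the proof.

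I do not expect a genuine obstacle: once Definition~\ref{m-pos} is available, the Rolle/G{\aa}rding mechanism is indifferent to how mixed the reference pair $(\alpha_{2,\lambda_2+1},\Psi)$ is, so the proof of Lemma~\ref{lem_c} transfers with only notational changes. The only real content — and what must be watched while iterating $N$ times toward $\Omega$ in Theorem~\ref{thm2} — is the bookkeeping of the induction: that Lemma~\ref{lem_c} identifies $C_1$ with the precise G{\aa}rding cone $C(P_1)$ over which Theorem~\ref{thm2.4}(1) permits polarizing $P_1$ into $P_2$, and that strict positivity of the accumulated wedge product (hence applicability of Definition~\ref{m-pos}) is preserved at each stage.
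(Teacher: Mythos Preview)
Your proposal is correct and takes exactly the same approach as the paper; the paper's own proof consists of the single sentence ``This follows by the same arguments in Lemma~\ref{lem_c},'' and you have simply spelled out those arguments with the appropriate notational substitutions $(P_2,\alpha_{2,\lambda_2+1},\Psi,M)$ in place of $(P_1,\alpha_{1,\lambda_1+1},\Phi,n-\lambda_1)$.
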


\begin{proof}
This follows by the same arguments in Lemma \ref{lem_c}.
\end{proof}

Working \emph{iteratively}, we easily arrive at the following

\begin{lem}\label{lem_c2}
Assume $\lambda_1,...,\lambda_N,\lambda_{N+1}\in\mathbb Z_{\ge1}$ with $\sum_{i=1}^{N+1}\lambda_s= n-2$. Assume $\alpha_{ij_i}\in\Lambda_{\mathbb R}^{1,1}(\mathbb C^n), 1\le i\le N+1$ and $1\le j_i\le\lambda_i+1$, be closed real $(1,1)$-forms on $\mathbb C^n$ such that $\alpha_{1j_1}$'s are K\"ahler metrics, and when $i\ge2$, $\alpha_{ij_i}$ is $(n-\sum_{s=1}^{i-1}\lambda_s)$-positive with respect to $(\bigwedge_{1\le s\le i-1,1\le j_s\le \lambda_s}\alpha_{sj_s},\alpha_{i-1,\lambda_{i-1}+1})$. Denote $\Omega:=\bigwedge_{1\le i\le N+1,1\le j_i\le \lambda_i}\alpha_{ij_i}$. Then 
\begin{itemize}
\item[(1)] the polynomial $P_{N+1}:=\Omega\wedge x^2$ is hyperbolic at $\alpha_{N+1,\lambda_{N+1}+1}$ and complete;
\item[(2)] $C(P_{N+1},\alpha_{N+1,\lambda_{N+1}+1})=\{x\in\Lambda^{1,1}_{\mathbb R}(\mathbb C^n)|$$x$ is $2$-positive with respect to $(\Omega,\alpha_{N+1,\lambda_{N+1}+1})\}$.
\end{itemize}
\end{lem}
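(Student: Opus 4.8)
The plan is to prove (1) and (2) together by induction on $i$, showing for every $1\le i\le N+1$ that the polynomial
$$P_i(x):=\Big(\bigwedge_{1\le s\le i,\;1\le j\le\lambda_s}\alpha_{sj}\Big)\wedge x^{\,n-m_i},\qquad m_i:=\sum_{s=1}^i\lambda_s,$$
viewed as a homogeneous polynomial of degree $n-m_i$ in $x\in\Lambda^{1,1}_{\mathbb R}(\mathbb C^n)$, is hyperbolic at $\alpha_{i,\lambda_i+1}$, is complete, satisfies $P_i(\alpha_{i,\lambda_i+1})>0$, and has G{\aa}rding cone $C(P_i,\alpha_{i,\lambda_i+1})$ equal to the set $C_i$ of real $(1,1)$-forms that are $(n-m_i)$-positive with respect to $\big(\alpha_{i,\lambda_i+1},\bigwedge_{s\le i}\alpha_{sj}\big)$ in the sense of Definition~\ref{m-pos}. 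The case $i=N+1$ has $n-m_{N+1}=2$ and $P_{N+1}=\Omega\wedge x^2$, so it yields exactly (1) and (2). The base case $i=1$ (with $\alpha_{1,\lambda_1+1}=\omega_0$) is Lemma~\ref{lem_c} together with the discussion preceding it, where $P_1$ was obtained from $P(x)=x^n$ via Theorem~\ref{thm2.4}(1).

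For the inductive step, assume the claim for $i-1$. By hypothesis each $\alpha_{ij}$, $1\le j\le\lambda_i+1$, is $(n-m_{i-1})$-positive with respect to $\big(\alpha_{i-1,\lambda_{i-1}+1},\bigwedge_{s\le i-1}\alpha_{sj}\big)$, so by the inductive identity $C_{i-1}=C(P_{i-1},\alpha_{i-1,\lambda_{i-1}+1})$ we have $\alpha_{i1},\dots,\alpha_{i,\lambda_i+1}\in C(P_{i-1},\alpha_{i-1,\lambda_{i-1}+1})$. Applying Theorem~\ref{thm2.4}(1) to $P_{i-1}$ in the $\lambda_i$ directions $\alpha_{i1},\dots,\alpha_{i\lambda_i}$ shows that $P_i(x)=\tilde P_{i-1}(\alpha_{i1},\dots,\alpha_{i\lambda_i},x,\dots,x)$ is hyperbolic at $\alpha_{i-1,\lambda_{i-1}+1}$ and complete, with $C(P_{i-1},\alpha_{i-1,\lambda_{i-1}+1})\subset C(P_i,\alpha_{i-1,\lambda_{i-1}+1})$, and the positivity assertion of Theorem~\ref{thm2.4}(1) gives $P_i(\alpha_{i-1,\lambda_{i-1}+1})>0$. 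Since $\alpha_{i,\lambda_i+1}$ lies in $C(P_{i-1},\alpha_{i-1,\lambda_{i-1}+1})\subset C(P_i,\alpha_{i-1,\lambda_{i-1}+1})$, Theorem~\ref{thm2.3}(2) lets us recenter the cone: $P_i$ is hyperbolic at $\alpha_{i,\lambda_i+1}$ with $C(P_i,\alpha_{i,\lambda_i+1})=C(P_i,\alpha_{i-1,\lambda_{i-1}+1})$, and then Theorem~\ref{thm2.3}(3) yields $P_i(\alpha_{i,\lambda_i+1})>0$.

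It remains to identify $C(P_i,\alpha_{i,\lambda_i+1})$ with $C_i$, and for this one repeats verbatim the argument of Lemma~\ref{lem_c}. Writing $\Phi_i:=\bigwedge_{s\le i}\alpha_{sj}$ and expanding
$$P_i(s\,\alpha_{i,\lambda_i+1}+x)=\sum_{k=0}^{n-m_i}\binom{n-m_i}{k}\,\Phi_i\wedge\alpha_{i,\lambda_i+1}^{\,n-m_i-k}\wedge x^k\;s^{\,n-m_i-k},$$
positivity of every coefficient forces $P_i(s\,\alpha_{i,\lambda_i+1}+x)>0$ for $s\ge0$, so $C_i\subset C(P_i,\alpha_{i,\lambda_i+1})$; conversely, for $x\in C(P_i,\alpha_{i,\lambda_i+1})$ the polynomial in $s$ on the right has only negative zeroes (its leading coefficient $P_i(\alpha_{i,\lambda_i+1})$ being positive), and differentiating and applying Rolle's theorem repeatedly forces every coefficient $\Phi_i\wedge\alpha_{i,\lambda_i+1}^{\,n-m_i-k}\wedge x^k$, $1\le k\le n-m_i$, to be positive, i.e. $x\in C_i$. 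This closes the induction.

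The only point needing care is the bookkeeping of the positivity indices: the $i$-th step applies Theorem~\ref{thm2.4}(1) to $P_{i-1}$, which has degree $n-m_{i-1}$ in $x$, and one must check the range condition $2\le n-m_i\le n-m_{i-1}-1$; the left inequality holds because $m_i\le m_{N+1}=n-2$, and the right because it is equivalent to $\lambda_i\ge1$, the degenerate case $\lambda_i=0$ (where $P_i=P_{i-1}$ and one only renames the centering form from $\alpha_{i-1,\lambda_{i-1}+1}$ to $\alpha_{i,\lambda_i+1}$ using Theorem~\ref{thm2.3}(2) and the remark following Lemma~\ref{lem_c}) being trivial. I expect this index bookkeeping, together with the repeated change of centering form, to be the only mildly delicate aspect; everything else is a direct iteration of the two-step pattern already carried out in Lemmas~\ref{lem_c} and~\ref{lem_c1}.
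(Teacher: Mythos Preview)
Your proof is correct and takes essentially the same approach as the paper, which simply states ``Working \emph{iteratively}, we easily arrive at the following'' after establishing the pattern in Lemmas~\ref{lem_c} and~\ref{lem_c1}. You have just spelled out that iteration carefully, including the recentering via Theorem~\ref{thm2.3}(2), the repetition of the Rolle-type argument from Lemma~\ref{lem_c}, and the bookkeeping for the degenerate case $\lambda_i=0$.
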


Having the above preparations, similar to \cite[Lemmas 3.8, 3.9]{X} we have
\begin{lem}\label{lem_c3}
Assume the same setting and notions as in Lemma \ref{lem_c2}, and let $\eta\in\Lambda_{\mathbb R}^{1,1}(\mathbb C^n)$ be a real $(1,1)$-forms on $\mathbb C^n$ which is $2$-positive with respect to  $(\Omega,\alpha_{N+1,\lambda_{N+1}+1})$.
\begin{itemize}
\item[(1)] The $(n-1,n-1)$-form $\Omega\wedge\eta$ is  strictly positive on $\mathbb C^n$.
\item[(2)] If $x\in\Lambda^{1,1}_{\mathbb R}(\mathbb C^n)$ satisfies $\Omega\wedge\eta\wedge x=0$, then 
$$\Omega\wedge x^2\le0,$$
and the equality holds if and only if $x=0$.
\end{itemize}
\end{lem}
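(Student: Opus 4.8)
The plan is to read both statements off the hyperbolic polynomial dictionary set up in Lemmas \ref{lem_c}--\ref{lem_c2}, in the spirit of \cite[Lemmas 3.8, 3.9]{X}. Write $\omega:=\alpha_{N+1,\lambda_{N+1}+1}$. By Lemma \ref{lem_c2}, $P_{N+1}(x)=\Omega\wedge x^2$ is hyperbolic at $\omega$, complete, with G\aa rding cone $C_{N+1}=C(P_{N+1},\omega)$ consisting precisely of the $(1,1)$-forms that are $2$-positive with respect to $(\omega,\Omega)$; moreover, iterating the inclusions $C(P,a)\subset C(P_m,a)$ from Theorem \ref{thm2.4}(1) along the chain $P_1,\dots,P_{N+1}$ shows that every positive definite $(1,1)$-form lies in $C_{N+1}$, so in particular $\omega,\eta\in C_{N+1}$. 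Running the iterated G\aa rding inequality of Theorem \ref{thm2.4} down this chain (the base case being the positivity of a product of $n$ K\"ahler metrics) one sees $P_{N+1}$ is positive at one point of $C_{N+1}$, hence on all of it by Theorem \ref{thm2.3}(3); thus $\Omega\wedge\eta^2>0$ and $\Omega\wedge\omega^2>0$.

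For (1): since weak and strong positivity coincide in bidegree $(n-1,n-1)$, it suffices to prove $\Omega\wedge\eta\wedge\beta>0$ for every nonzero semi-positive real $(1,1)$-form $\beta$. As every positive definite $(1,1)$-form lies in $C_{N+1}$, taking closures every semi-positive $(1,1)$-form lies in $\overline{C_{N+1}}$; in particular $\beta\in\overline{C_{N+1}}\setminus\{0\}$ while $\eta\in C_{N+1}$. Applying the last assertion of Theorem \ref{thm2.4}(1) to $P_{N+1}$ (which has degree $2$, so its full polarization has two slots, $\tilde P_{N+1}(x_1,x_2)=\Omega\wedge x_1\wedge x_2$) with $x_1=\eta$ and $x_2=\beta$ yields $\Omega\wedge\eta\wedge\beta=\tilde P_{N+1}(\eta,\beta)>0$, which is what we wanted.

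For (2): here $P_{N+1}$ is a genuine real quadratic form on the finite-dimensional space $\Lambda^{1,1}_{\mathbb R}(\mathbb C^n)$, with associated symmetric bilinear form $B(y,z)=\tilde P_{N+1}(y,z)=\Omega\wedge y\wedge z$, and completeness of $P_{N+1}$ is exactly the non-degeneracy of $B$. By Theorem \ref{thm2.3}(2), $P_{N+1}$ is also hyperbolic at $\eta$, and $B(\eta,\eta)=\Omega\wedge\eta^2>0$. Hyperbolicity at $\eta$ says that for every real $x$ the polynomial $P_{N+1}(s\eta+x)=s^2\,\Omega\wedge\eta^2+2s\,\Omega\wedge\eta\wedge x+\Omega\wedge x^2$ in $s$ has real roots, i.e. $(\Omega\wedge\eta\wedge x)^2\ge(\Omega\wedge\eta^2)(\Omega\wedge x^2)$. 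If $\Omega\wedge\eta\wedge x=0$, this together with $\Omega\wedge\eta^2>0$ forces $\Omega\wedge x^2\le0$. For the equality case, $B(\eta,\eta)\neq0$ gives a $B$-orthogonal splitting $\Lambda^{1,1}_{\mathbb R}(\mathbb C^n)=\mathbb R\eta\oplus H$ with $H:=\{x:\Omega\wedge\eta\wedge x=0\}$, and $B$ restricts to a non-degenerate form on $H$; the inequality just proved shows $B|_H\le0$, and a non-degenerate negative semi-definite form is negative definite. Hence $\Omega\wedge x^2=B(x,x)<0$ for every nonzero $x\in H$, which is the claimed equality characterization.

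Given Lemmas \ref{lem_c}--\ref{lem_c2} there is no deep remaining obstacle; the two points requiring care are: (a) that semi-positive $(1,1)$-forms, rank-one ones included, already lie in $\overline{C_{N+1}}$ — this is what lets us feed an arbitrary test form to G\aa rding's strict inequality in (1), and it follows at once from $C_{N+1}$ containing all positive definite forms; and (b) that it is \emph{completeness}, not merely hyperbolicity, that upgrades $B|_H$ from negative semi-definite to negative definite and so delivers the equality statement in (2). A third, purely bookkeeping, point is to verify that $P_{N+1}$ is positive somewhere on $C_{N+1}$ before invoking Theorem \ref{thm2.3}(3), which is handled by the iterated G\aa rding inequality along $P_1,\dots,P_{N+1}$ as above.
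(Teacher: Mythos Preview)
Your argument is correct and, for part (1), essentially identical to the paper's: both show that semi-positive $(1,1)$-forms lie in $\overline{C_{N+1}}$ (since K\"ahler forms lie in $C_{N+1}$ via the iterated inclusions $C(P)\subset C(P_1)\subset\cdots\subset C(P_{N+1})$) and then invoke the strict positivity in Theorem~\ref{thm2.4}(1).

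For part (2) your route diverges slightly from the paper's. The paper applies the G{\aa}rding inequality of Theorem~\ref{thm2.4}(2) to the pair $(\eta,\,x+s\eta)$ with $s\gg0$, cancels the $s$-dependence, and then reads off the equality case from the proportionality clause of Theorem~\ref{thm2.4}(2). You instead exploit that $P_{N+1}$ has degree~$2$: hyperbolicity at $\eta$ is literally the non-negativity of the discriminant of $s\mapsto P_{N+1}(s\eta+x)$, which \emph{is} the G{\aa}rding inequality in this degree without any appeal to Theorem~\ref{thm2.4}(2); and for the equality case you use the linear-algebra fact that completeness of a quadratic form is non-degeneracy of its polarization, so $B|_H$ is non-degenerate and hence negative definite. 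Both approaches are valid; yours is more self-contained for quadratics (and makes transparent why completeness is exactly what is needed for the equality statement), while the paper's generalizes verbatim to higher-degree $P_m$.

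One small slip in your write-up: you deduce $\omega,\eta\in C_{N+1}$ from the claim that positive definite forms lie in $C_{N+1}$, but neither $\omega=\alpha_{N+1,\lambda_{N+1}+1}$ nor $\eta$ is assumed positive definite. The correct justifications are immediate from the setup: $\eta\in C_{N+1}$ is precisely the hypothesis ``$\eta$ is $2$-positive with respect to $(\omega,\Omega)$'' by Lemma~\ref{lem_c2}(2), and $\omega\in C(P_N)\subset C(P_{N+1})=C_{N+1}$ by the assumption on $\alpha_{N+1,\lambda_{N+1}+1}$ together with Theorem~\ref{thm2.4}(1). This does not affect the rest of your argument.
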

\begin{proof}
Item (1) is a consequence of Theorem \ref{thm2.4}(1). To see this we first note that the given $\eta\in C(P_{N+1},\alpha_{N+1,\lambda_{N+1}+1})$ by Lemma \ref{lem_c2}(2). Moreover, for any non-zero semi-positive real $(1,1)$-form $x_0\in\Lambda^{1,1}_{\mathbb R}(\mathbb C^n)$, we have $x_0\in  \overline{C(P_{N+1},\alpha_{N+1,\lambda_{N+1}+1})}\setminus\{0\}$, as $x_0$ belongs to the closure of the set $\mathcal C\subset \Lambda^{1,1}_{\mathbb R}(\mathbb C^n)$ of all K\"ahler metrics and $\mathcal C\subset C(P_{1},\omega_0)\subset C(P_{N+1},\omega_0)=C(P_{N+1},\alpha_{N+1,\lambda_{N+1}+1})$ by Theorem \ref{thm2.4}(1). Therefore, again by Theorem \ref{thm2.4}(1),
\begin{align}
\Omega\wedge\eta\wedge x_0=P_{N+1}(\eta,x_0)>0.
\end{align}

Item (2) is a consequence of G{\aa}rding inequality in Theorem \ref{thm2.4}(2). Let's consider $P_{N+1}$, which, by Lemma \ref{lem_c2}(1), is hyperbolic at $\alpha_{N+1,\lambda_{N+1}+1}$ and complete.
Since $\eta\in C(P_{N+1},\alpha_{N+1,\lambda_{N+1}+1})$ and $C(P_{N+1},\alpha_{N+1,\lambda_{N+1}+1})$ is an open convex cone in $\Lambda^{1,1}_{\mathbb R}(\mathbb C^n)$ by Theorem \ref{thm2.3}(1), we know that, for sufficiently large number $s$, $x+s\eta\in C(P_{N+1},\alpha_{N+1,\lambda_{N+1}+1})$. Then by Theorem \ref{thm2.4}(2) there holds
$$\tilde P_{N+1}(\eta,x+s\eta)^2\ge P_{N+1}(\eta)\cdot P_{N+1}(x+s\eta),$$
which, as can be checked directly, is equivalent to
$$\tilde P_{N+1}(\eta,x)^2\ge P_{N+1}(\eta)\cdot P_{N+1}(x),$$
from which the desired inequality follows, since by assumption $\tilde P_{N+1}(\eta,x)=0$ and by Theorem \ref{thm2.4}(2) (or the above item (1)) $P_{N+1}(\eta)>0$. Moreover, the equality holds if and only if $\eta$ and $x$ are proportional, which, by using again $P_{N+1}(\eta,x)=0$ and $P_{N+1}(\eta)>0$, in turn implies $x=0$.
\end{proof}
 	
\begin{proof}[End of the proof of Theorem \ref{thm2}]
Given the above preparations, we can easily carry out the global case on a compact K\"ahler manifold $X$ and finish the proof of Theorem \ref{thm2} by applying an identical arguments in \cite[Section 3.2]{X}. To make this note more complete and readable, let's give a sketch.

Now assume we are given the setting and notations in Theorem \ref{thm2}. It suffices to check the result for elements in $P^{1,1}(X,\mathbb R):=P^{1,1}(X,\mathbb C)\cap H^{2}(X,\mathbb R)$. Arbitrarily take $[\alpha]\in P^{1,1}(X,\mathbb R)$ with a smooth representative $\alpha$, i.e. $[\Omega]\wedge[\eta]\wedge[\alpha]=0$. Then we obviously have
\begin{equation}\label{laplace-condition}
\int_X\Omega\wedge \eta\wedge\alpha=0.
\end{equation}
On the other hand, Lemma \ref{lem_c3}(1) implies that the following is a second order elliptic equation of $\phi$:
\begin{equation}\label{laplace}
\Omega\wedge \eta\wedge\sqrt{-1}\partial\bar\partial\phi=-\Omega\wedge \eta\wedge\alpha.
\end{equation}
Since both $\Omega$ and $\eta$ are closed, the compatibility condition of \eqref{laplace} is exactly \eqref{laplace-condition}. Then by standard elliptic theory one obtains a smooth solution $\phi\in C^\infty(X,\mathbb R)$ to the above equation \eqref{laplace}, i.e.
$$\Omega\wedge \eta\wedge(\alpha+\sqrt{-1}\partial\bar\partial\phi)=0.$$
Then Lemma \ref{lem_c3}(2) implies 
\begin{equation}\label{pt_ineq}
\Omega\wedge(\alpha+\sqrt{-1}\partial\bar\partial\phi)^2\le0
\end{equation}
on $X$, and the equality holds at some $p\in X$ if and only if $\alpha+\sqrt{-1}\partial\bar\partial\phi=0$ at $p$. Integrating \eqref{pt_ineq} gives
$$Q([\alpha],[\alpha])=\int_X\Omega\wedge(\alpha+\sqrt{-1}\partial\bar\partial\phi)^2\le0,$$
and the equality holds if and only if $\alpha+\sqrt{-1}\partial\bar\partial\phi=0$ everywhere on $X$, i.e. $[\alpha]=0$ in $H^{1,1}(X,\mathbb R)$.
\par The proof is completed.
\end{proof}

\begin{rem}\label{rem_ques}
(1) From the above discussions, to obtain general abstract elements in $\mathscr H$, a natural approach may be characterizing the positive $(m,m)$-forms, say $\Phi\in \Lambda^{m,m}_{\mathbb R}(\mathbb C^n)$, with the property that the following homogeneous polynomial defined on $\Lambda^{1,1}_{\mathbb R}(\mathbb C^n)$,
$$P_{\Phi}(x):=\Phi\wedge x^{n-m},$$
is hyperbolic (and complete). Such a $\Phi$, together with $n-m-1$ elements in $C(P_\Phi)$, could produce general abstract elements in $\mathscr H$. 

(2) Relating to the above problem, it is also natural to consider the following equation on a compact K\"ahler manifold $X$:
\begin{equation}\label{mix_hess}
\Phi\wedge(\alpha+\sqrt{-1}\partial\bar\partial\phi)^{n-m}=\Psi,
\end{equation}
where $\Psi$ is a smooth volume from on $X$, and $\alpha$ may be assumed to be $(n-m)$-positive with respect to $(\omega_X,\Phi)$. The case that $\Phi=\omega_X^{m}$ is just the complex Hessian equation, and the case that $\Phi=\omega_1\wedge...\wedge\omega_{m}$ with $\omega_j$'s K\"ahler metrics on $X$ has been proposed in \cite[Remark 2.12]{X}. It seems interesting to explore the relations between the solvability of \eqref{mix_hess} and the hyperbolicity of $P_\Phi$, which may have applications in understanding the structure of $\mathscr H$. 
\end{rem}

\subsection{Khovanskii-Teissier type inequalities}\label{subsect_kt}

\begin{rem}\label{rem_cor}
We remark some consequences of the Hodge index theorem, which should be well-known (see e.g. \cite{DN,X}). 
\begin{itemize}
\item[(1)] For any $([\Omega],[\eta])\in\mathscr{H}$ with $[\Omega\wedge\eta]\neq0$, we have the Hard Lefschetz and Lefschetz Decomposition Theorems (with respect to $([\Omega],[\eta])$) as follows:
\begin{itemize}
\item[(a)] The map $[\Omega]:H^{1,1}(X,\mathbb C)\to H^{n-1,n-1}(X,\mathbb C)$ is an isomorphism;
\item[(b)] The space $H^{1,1}(X,\mathbb C)$ has a $Q$-orthogonal direct sum decomposition
$$H^{1,1}(X,\mathbb C)=P^{1,1}(X,\mathbb C)\oplus\mathbb C[\eta].$$
\end{itemize}
For (a) it suffices to check that it is injective. Assume $[\alpha]\in H^{1,1}(X,\mathbb C)$ with $[\Omega]\wedge[\alpha]=0$, then obviously there hold $[\alpha]\in P^{1,1}(X,\mathbb C)$ and $Q([\alpha],[\alpha])=0$, and hence by Hodge index theorem $[\alpha]=0$. This proves (a).\\
For (b) we consider the map $[\Omega]\wedge[\eta]:H^{1,1}(X,\mathbb C)\to H^{n,n}(X,\mathbb C)$, whose kernel is exactly $P^{1,1}(X,\mathbb C)$. Moreover, restricting to the subspace $\mathbb C[\eta]$ this map specifies an isomorphism between $\mathbb C[\eta]$ and $H^{n,n}(X,\mathbb C)$, as $H^{n,n}(X,\mathbb C)\cong\mathbb C$ and we can check that $[\Omega]\wedge[\eta]\wedge[\eta]\neq0$. Therefore, $H^{1,1}(X,\mathbb C)=P^{1,1}(X,\mathbb C)\oplus\mathbb C[\eta]$, which is obviously $Q$-orthogonal. This proves (b).
\item[(2)] For any $([\Omega],[\eta])\in\mathscr{H}$ with $[\Omega\wedge\eta]\neq0$, we have the Khovanskii-Teissier type inequalities as follows. 
\begin{itemize}
\item[(c)] For any closed real $(1,1)$-forms $\phi,\psi\in H^{1,1}(X,\mathbb R)$ with $\phi$ $2$-positive with respect to $(\Omega,\eta)$, then 
$$\left(\int_X\Omega\wedge\phi\wedge\psi\right)^2\ge\left(\int_X\Omega\wedge\phi^2\right)\left(\int_X\Omega\wedge\psi^2\right)$$
with equality if and only if $[\phi]$ and $[\psi]$ are proportional.
\end{itemize}
Indeed, using Hodge index theorem for $([\Omega],[\eta])$, one can easily check that the polynomial $P([\phi]):=\int_{X}\Omega\wedge\phi^2$ defined on $H^{1,1}(X,\mathbb R)$ is hyperbolic at $[\eta]$ and complete, with $P([\eta])>0$. Then if $\phi$ is $2$-positive with respect to $(\Omega,\eta)$, we have $[\phi]\in C(P,[\eta])$. Now the desired conclusion follows from G{\aa}rding inequality and the similar arguments in Lemma \ref{lem_c3}(2).
\end{itemize}
\end{rem}

Combining Theorem \ref{thm2} and Remark \ref{rem_cor}(2), we immediately conclude the following Khovanskii-Teissier type inequality.
\begin{thm}\label{(1,1)-kt}
Assume $\lambda_1,...,\lambda_N,\lambda_{N+1}\in\mathbb Z_{\ge1}$ with $\sum_{i=1}^{N+1}\lambda_s= n-2$. Assume $\alpha_{ij_i}, 1\le i\le N+1$ and $1\le j_i\le\lambda_i+1$, be closed real $(1,1)$-forms on $X$ such that $\alpha_{1j_1}$'s are K\"ahler metrics, and when $2\le i\le N+1$, $\alpha_{ij_i}$ is $(n-\sum_{s=1}^{i-1}\lambda_s)$-positive with respect to\\ $(\bigwedge_{1\le s\le i-1,1\le j_s\le \lambda_s}\alpha_{sj_s},\alpha_{i-1,\lambda_{i-1}+1})$. Denote $\Omega:=\bigwedge_{1\le i\le N+1,1\le j_i\le \lambda_i}\alpha_{ij_i}$. Assume $\eta$ be a closed real $(1,1)$-forms on $X$ which is $2$-positive with respect to  $(\Omega,\alpha_{N+1,\lambda_{N+1}+1})$. Then for any $[\alpha]\in H^{1,1}(X,\mathbb R)$, we have
$$\left(\int_X\Omega\wedge\eta\wedge\alpha\right)^2\ge\left(\int_X\Omega\wedge\eta^2\right)\left(\int_X\Omega\wedge\alpha^2\right),$$
and the equality holds if and only if $[\eta]$ and $[\alpha]$ are proportional.
\end{thm}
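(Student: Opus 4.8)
The plan is to obtain this as a formal consequence of Theorem \ref{thm2} together with the Hodge-index package summarized in Remark \ref{rem_cor}(2). The hypotheses imposed here on $\lambda_1,\dots,\lambda_{N+1}$, on the forms $\alpha_{ij}$, and on $\eta$ are precisely those of Theorem \ref{thm2}, so that theorem gives $([\Omega],[\eta])\in\mathscr H$ at no cost, and it remains only to feed this into Remark \ref{rem_cor}(2).

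Before invoking Remark \ref{rem_cor}(2) I would record the two bookkeeping points it tacitly requires. First, $[\Omega\wedge\eta]\neq 0$: working pointwise and passing to constant-coefficient representatives at each point of $X$ (exactly as in the proof of Theorem \ref{thm2}), Lemma \ref{lem_c3}(1) shows that $\Omega\wedge\eta$ is a strictly positive $(n-1,n-1)$-form on $X$; hence $\int_X\Omega\wedge\eta\wedge\omega_X>0$, and in particular $[\Omega\wedge\eta]\neq0$. Second, I would check that $\eta$ is itself $2$-positive with respect to $(\Omega,\eta)$ in the sense of Definition \ref{m-pos}: with the reference $(1,1)$-form equal to $\eta$ and the tested form also equal to $\eta$, the two conditions ($k=1,2$) both reduce to $\Omega\wedge\eta^2>0$ pointwise, which is already part of the standing assumption that $\eta$ is $2$-positive with respect to $(\alpha_{N+1,\lambda_{N+1}+1},\Omega)$.

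With these in hand, Remark \ref{rem_cor}(2)(c) applied to the pair $\phi=\eta$, $\psi=\alpha$ yields the asserted inequality together with the equality characterization ``$[\eta]$ and $[\alpha]$ proportional'' verbatim. For completeness one could unwind (c): the Hodge index theorem for $([\Omega],[\eta])$ makes $P([\phi]):=\int_X\Omega\wedge\phi^2$ hyperbolic at $[\eta]$ and complete with $P([\eta])>0$, so $[\eta]\in C(P,[\eta])$; G{\aa}rding's inequality in the form of Theorem \ref{thm2.4}(2), combined with the usual $s\to\infty$ substitution already used in the proof of Lemma \ref{lem_c3}(2), then delivers the estimate and pins down the equality case to proportionality. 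Since every ingredient is already established, I do not anticipate any genuine obstacle; the only points demanding care are the matching of the several positivity conventions and the passage from the pointwise statement of Lemma \ref{lem_c3}(1) to global positivity on $X$, and the sole substantive input is Theorem \ref{thm2}.
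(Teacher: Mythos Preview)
Your proposal is correct and follows exactly the route the paper takes: the paper proves this theorem in one line by ``Combining Theorem \ref{thm2} and Remark \ref{rem_cor}(2),'' and your argument is precisely that, with the bookkeeping (nonvanishing of $[\Omega\wedge\eta]$ via Lemma \ref{lem_c3}(1), and the trivial check that $\eta$ is $2$-positive with respect to $(\eta,\Omega)$) spelled out explicitly. There is nothing to add.
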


\begin{exmp}
Let's look at the special case corresponding to Theorem \ref{thm1}. Fix an integer $m\le n-2$. Assume $\omega_1,...,\omega_{m}$ are K\"ahler metrics and $\alpha_1,...,\alpha_{n-m-1}$ closed real $(1,1)$-forms on $X$ such that every $\alpha_j$ is $(n-m)$-positive with respect to $(\omega_1\wedge...\wedge\omega_{m},\omega_X)$. Set $\Omega:=\omega_1\wedge...\wedge\omega_{m}\wedge\alpha_1\wedge...\wedge\alpha_{n-m-2}$. Then for any $[\alpha]\in H^{1,1}(X,\mathbb R)$,
$$\left(\int_X\Omega\wedge\alpha_{n-m-1}\wedge\alpha\right)^2\ge\left(\int_X\Omega\wedge\alpha_{n-m-1}^2\right)\left(\int_X\Omega\wedge\alpha^2\right),$$
and the equality holds if and only if $[\alpha_{n-m-1}]$ and $[\alpha]$ are proportional.
\end{exmp}
Similarly, we have the following log-convavity, which generalizes \cite[Theorem B]{X} and \cite[Corollary 2.16]{Co} to a more mixed/polarized and degenerate setting.
\begin{thm}
Assume $\lambda_1,...,\lambda_N,\lambda_{N+1}\in\mathbb Z_{\ge1}$ with $\sum_{i=1}^{N+1}\lambda_s= n-2$. Assume $\alpha_{ij_i}, 1\le i\le N$ and $1\le j_i\le\lambda_i+1$, be closed real $(1,1)$-forms on $X$ such that $\alpha_{1j_1}$'s are K\"ahler metrics, and when $2\le i\le N$, $\alpha_{ij_i}$ is $(n-\sum_{s=1}^{i-1}\lambda_s)$-positive with respect to\\ $(\bigwedge_{1\le s\le i-1,1\le j_s\le \lambda_s}\alpha_{sj_s},\alpha_{i-1,\lambda_{i-1}+1})$. Denote $\tilde\Omega:=\bigwedge_{1\le i\le N,1\le j_i\le \lambda_i}\alpha_{ij_i}$. Assume $\alpha,\beta$ be two closed real $(1,1)$-forms on $X$ which is $(\lambda_{N+1}+2)$-positive with respect to $(\tilde\Omega,\alpha_{N,\lambda_{N}+1})$ (note that $\lambda_{N+1}+2=n-\sum_{s=1}^{N}\lambda_s$), and set $a_k:=\int_X\tilde\Omega\wedge\alpha^k\wedge\beta^{\lambda_{N+1}+2-k},0\le k\le \lambda_{N+1}+2$. Then for any $1\le k\le \lambda_{N+1}+1$ we have
$$a_k^2\ge a_{k-1}a_{k+1}$$
with equality holding for some $k$ if and only if $[\alpha]$ and $[\beta]$ are proportional.
\end{thm}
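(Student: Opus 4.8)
The plan is to reduce the log-concavity, for each fixed $k$, to a Khovanskii--Teissier type inequality of the kind produced by Theorem~\ref{thm2} together with Remark~\ref{rem_cor}(2), applied to an auxiliary class of bidegree $(n-2,n-2)$ built from $\tilde\Omega$, $\alpha$ and $\beta$. Fix $k$ with $1\le k\le\lambda_{N+1}+1$, so that $k-1\ge0$ and $\lambda_{N+1}+1-k\ge0$, and set
$$\Theta:=\tilde\Omega\wedge\alpha^{k-1}\wedge\beta^{\lambda_{N+1}+1-k},\qquad[\Theta]\in H^{n-2,n-2}(X,\mathbb R).$$
Since $\Theta\wedge\alpha\wedge\beta=\tilde\Omega\wedge\alpha^{k}\wedge\beta^{\lambda_{N+1}+2-k}$, $\Theta\wedge\alpha^2=\tilde\Omega\wedge\alpha^{k+1}\wedge\beta^{\lambda_{N+1}+1-k}$ and $\Theta\wedge\beta^2=\tilde\Omega\wedge\alpha^{k-1}\wedge\beta^{\lambda_{N+1}+3-k}$, integration over $X$ identifies the desired inequality $a_k^2\ge a_{k-1}a_{k+1}$, and its equality case, with
$$\left(\int_X\Theta\wedge\alpha\wedge\beta\right)^2\ge\left(\int_X\Theta\wedge\alpha^2\right)\left(\int_X\Theta\wedge\beta^2\right)$$
holding with equality if and only if $[\alpha]$ and $[\beta]$ are proportional.

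Next I would exhibit $[\Theta]$ as the class $[\Omega']$ of an extended tower to which Theorem~\ref{thm2} applies. Keeping the given rows $\alpha_{ij}$ $(1\le i\le N)$, append one further row by setting $\alpha_{N+1,1}=\cdots=\alpha_{N+1,k-1}:=\alpha$, $\alpha_{N+1,k}=\cdots=\alpha_{N+1,\lambda_{N+1}}:=\beta$, and taking the new pivot $\rho:=\alpha_{N+1,\lambda_{N+1}+1}$ to be a K\"ahler metric. Then $\sum_{i=1}^{N+1}\lambda_i=n-2$ and the associated form $\Omega'$ is exactly $\Theta$; the positivity demanded of this new row by Theorem~\ref{thm2} is precisely $(n-\sum_{s=1}^N\lambda_s)=(\lambda_{N+1}+2)$-positivity with respect to $(\alpha_{N,\lambda_N+1},\tilde\Omega)$, which holds for $\alpha$ and $\beta$ by hypothesis and for the K\"ahler metric $\rho$ automatically. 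The one point that still has to be checked for Theorem~\ref{thm2} and Remark~\ref{rem_cor}(2) is the pointwise positivity of the various \emph{mixed} wedge products $\tilde\Omega\wedge\alpha^a\wedge\beta^b$ and $\tilde\Omega\wedge\rho^a\wedge\alpha^b$ (exponents summing to $\lambda_{N+1}+2$): running the linear-algebra input of Lemmas~\ref{lem_c}--\ref{lem_c2} through the first $N$ rows shows that $P_N(x):=\tilde\Omega\wedge x^{\lambda_{N+1}+2}$ is (pointwise) hyperbolic at $\alpha_{N,\lambda_N+1}$ and complete, with G{\aa}rding cone $C(P_N)$ equal to the set of $(\lambda_{N+1}+2)$-positive forms with respect to $(\alpha_{N,\lambda_N+1},\tilde\Omega)$; in particular $\alpha$, $\beta$, $\rho$ and all K\"ahler metrics lie in $C(P_N)$, so Theorem~\ref{thm2.4}(1) forces every such mixed wedge product to be pointwise $>0$. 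In particular $\alpha$ is $2$-positive with respect to $(\rho,\Theta)$, $[\Theta\wedge\rho]\neq0$, and $a_{k-1},a_{k+1}>0$.

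With these checks done, Theorem~\ref{thm2} gives $([\Theta],[\rho])\in\mathscr H$, and then Remark~\ref{rem_cor}(2)(c), applied to this pair with $\phi=\alpha$ and $\psi=\beta$, yields the Khovanskii--Teissier inequality displayed above, with equality if and only if $[\alpha]$ and $[\beta]$ are proportional; this gives $a_k^2\ge a_{k-1}a_{k+1}$ for the fixed $k$. Since $k$ was arbitrary in the stated range, the inequality holds for all $1\le k\le\lambda_{N+1}+1$; conversely, if $[\alpha]$ and $[\beta]$ are proportional then $a_j=c^{j}a_0$ for a constant $c$ and all the inequalities become equalities, which settles the equality assertion. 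The degenerate instances are covered by the same argument: when $\lambda_{N+1}=0$ one has $\Theta=\tilde\Omega$ and the statement is the preceding theorem, and when $k=1$ or $k=\lambda_{N+1}+1$ the form $\Theta$ simply involves only $\beta$ or only $\alpha$.

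The main obstacle, as already indicated, is the second paragraph: one must confirm that the appended row meets the \emph{exact} positivity hypothesis of Theorem~\ref{thm2} at level $N+1$ (this is where the assumption that $\alpha,\beta$ are $(\lambda_{N+1}+2)$-positive with respect to $(\alpha_{N,\lambda_N+1},\tilde\Omega)$ enters), and, more importantly, that the \emph{mixed} pointwise positivities $\tilde\Omega\wedge\alpha^a\wedge\beta^b>0$ hold. These last do not follow from the hypotheses on $\alpha$ and $\beta$ taken separately---those only control $\tilde\Omega$ wedged with powers of a \emph{single} one of $\alpha,\beta$ against $\alpha_{N,\lambda_N+1}$---but they do follow once $\alpha$ and $\beta$ are placed in the common convex G{\aa}rding cone $C(P_N)$ and G{\aa}rding's positivity (Theorem~\ref{thm2.4}(1)) is invoked. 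Everything else is a formal reduction to results already in hand.
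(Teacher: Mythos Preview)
Your proposal is correct and follows essentially the same approach as the paper, which gives no detailed argument for this theorem beyond the words ``Similarly'' after deriving the preceding Khovanskii--Teissier inequality from Theorem~\ref{thm2} and Remark~\ref{rem_cor}(2). Your reduction---for each fixed $k$, building $\Theta=\tilde\Omega\wedge\alpha^{k-1}\wedge\beta^{\lambda_{N+1}+1-k}$, extending the tower by one row of $\alpha$'s and $\beta$'s with a K\"ahler pivot $\rho$, invoking Lemma~\ref{lem_c2} and Theorem~\ref{thm2.4}(1) to get the mixed pointwise positivities, and then applying Theorem~\ref{thm2} and Remark~\ref{rem_cor}(2)(c)---is exactly the intended fleshing-out of that one word.
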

\begin{proof}
Note that if $\alpha,\beta$ are $(\lambda_{N+1}+2)$-positive with respect to $(\tilde\Omega,\alpha_{N,\lambda_{N}+1})$, then by Lemma \ref{thm2.4}(1), both $\alpha,\beta$ are $2$-positive with respect to $(\tilde\Omega\wedge\alpha^{k-1}\wedge\beta^{\lambda_{N+1}+1-k},\alpha)$ for each  $1\le k\le \lambda_{N+1}+1$. Therefore, the result follows from Theorem \ref{(1,1)-kt} immediately.
\end{proof}

\begin{exmp}
Let's consider $N=1$ case in Theorem 2.11, and denote $\lambda_1=m$ (then $\lambda_2=n-m-2$). Write $\alpha_{1j}=:\omega_j, j=1,...,m$, which are Kahler metrics; and assume $\alpha,\beta$ are closed real $(1,1)$-forms which are $(n-m)$-positive with respect to $(\omega_1\wedge...\wedge\omega_m,\omega_X)$. Then Theorem 2.11 reads, after setting $a_k:=\int_X\omega_1\wedge...\wedge\omega_m\wedge\alpha^k\wedge\beta^{n-m-k}$, 
$$a_k^2\ge a_{k-1}a_{k+1} \,\,for\,\,each\,\,k=1,2,...,n-m-1.$$
These log-concavity results generalize the ones in \cite[Theorem B]{X} and \cite[Corollary 2.16]{Co}, in which the case when $\omega_1=...=\omega_m=\omega_X$ and $\alpha,\beta$ are $(n-m)$-positive with respect to $\omega_X$ is settled.
\end{exmp}

\section{Hodeg-Riemann bilinear relation: Proof of Theorem \ref{thm3}}\label{sect_hr}

\subsection{A linear version of Theorem \ref{thm3}}\label{linearversion}
A proof for Theorem \ref{thm3} can be achieved by adapting the arguments \cite{T,DN,DN13,X2}. The first step is to prove a linear/local version of Theorem \ref{thm3} on $\mathbb C^n$ (see Proposition \ref{prop-local} below). To work with $\mathbb C^n$, similar to the previous sections, we denote $\Lambda^{p,q}(\mathbb C^n)$ be the space of $(p,q)$-forms of constant coefficients on $\mathbb C^n$ and $\Lambda^{p,p}_{\mathbb R}(\mathbb C^n)$ be the space of real $(p,p)$-forms of constant coefficients on $\mathbb C^n$; for a given pair $(\Omega,\eta)\in\Lambda^{n-k,n-k}_{\mathbb R}(\mathbb C^n)\times\Lambda^{1,1}_{\mathbb R}(\mathbb C^n)$ and $(p,q)$ with $0\le p,q\le p+q\le n$ and $p+q=k$, we define the primitive space with respect to $(\Omega,\eta)$ in $\Lambda^{p,q}(\mathbb C^n)$ by
$$P^{p,q}=P^{p,q}_{(\Omega,\eta)}:=\{\gamma\in\Lambda^{p,q}(\mathbb C^n)|\Omega\wedge\eta\wedge\gamma=0\,\,on\,\,\mathbb C^n\},$$
and we say that $(\Omega,\eta)$ satisfies the Hodge-Riemann bilinear relation on $\Lambda^{p,q}(\mathbb C^n)$ if the quadratic form
$$Q(\beta,\eta):=\sqrt{-1}^{q-p}(-1)^{\frac{(p+q)(p+q+1)}{2}}\Omega\wedge\beta\wedge\bar\eta$$
is positive definite on $P^{p,q}_{(\Omega,\eta)}$. Let $\mathscr H^{n-k,n-k}$ be the set of pair $(\Omega,\eta)\in\Lambda^{n-k,n-k}_{\mathbb R}(\mathbb C^n)\times\Lambda^{1,1}_{\mathbb R}(\mathbb C^n)$ satifying the Hodge-Riemann bilinear relation on $\Lambda^{p,q}(\mathbb C^n)$ for any $(p,q)$ with $0\le p,q\le p+q\le n$ and $p+q=k$.

The following is a linear version of Theorem \ref{thm3} that we need later.
\begin{prop}\label{prop-local}
Given $N\in \mathbb Z_{\ge0}$. Assume $\lambda_0=0$ and $\lambda_1,...,\lambda_N\in\mathbb Z_{\ge1}$ with $\sum_{i=0}^N\lambda_i=n-(p+q)$, $p,q\in\mathbb Z_{\ge1}$. For $1\le i\le N$, assume $\alpha_{ij_i}\in\Lambda^{1,1}_{\mathbb R}(\mathbb C^n)$, $1\le j_i\le\lambda_i$, be semi-positive real $(1,1)$-form on $\mathbb C^n$ such that $\alpha_{ij_i}$ has at least $(n-\sum_{s=0}^{i-1}\lambda_s)$ positive eigenvalues. Denote $\Omega:=\bigwedge_{1\le i\le N,1\le j_i\le \lambda_i}\alpha_{ij_i}$ if $N\ge1$ and $\Omega=1\in\mathbb R$ if $N=0$. Assume $\eta_1,...,\eta_{p+q-2},\eta_{p+q-1}\in\Lambda^{1,1}_{\mathbb R}(\mathbb C^n)$ are semi-positive real $(1,1)$-forms on $\mathbb C^n$ of at least $(n-\sum_{s=0}^{N}\lambda_s)$ positive eigenvalues (note that $p+q-2=n-2-\sum_{i=1}^N\lambda_s$). Then for $0\le d\le[(p+q)/2]-1$,
$$(\Omega\wedge\eta_1\wedge...\wedge\eta_{2d},\eta_{2d+1})\in\mathscr H^{n-p-q+2d,n-p-q+2d}.$$
\end{prop}

In fact, the $N=0$ case is exactly Timorin's work \cite{T}, while $N=1$ case is (implicitly) Xiao's \cite[Theorem 3.1,Remarks 2.2 and 3.9]{X2}. 

Basing on the $N=0$ case, we can take an induction argument similar to \cite{T} to prove the general case. We particularly remark that we will do induction with respect to the number $N$, while the original reduction argument in \cite{T} (and the modified one in \cite{X2}) was made with respect to the dimension of manifolds. It seems that our argument gives a \emph{slightly} different proof for \cite[Theorem 3.1]{X2}.

First of all, let's collect two useful linear algebra lemmas from \cite{X2} as follows.

For any $v\in\mathbb C^n\setminus \{0\}$, $H_v$ is the hyperplane defined by 
\begin{align}\label{H_v}
H_v:=\{x\in\mathbb C^n|v\cdot x=0\}.
\end{align}

\begin{lem}\cite[Lemma 2.1]{X2}\label{lem_la1}
For any positive integral $r<n$ and $\beta\in\Lambda^{1,1}_{\mathbb R}(\mathbb C^n)$, which is semi-positive and has at least $r$ positive eigenvalues, there is a proper subspace $S(\beta)$ of $\mathbb C^n$ such that for any $v\in\mathbb C^n\setminus S(\beta)$, $\beta_{|H_v}$, the restriction of $\beta$ to $H_v$, is semi-positive and has at least $r$ positive eigenvalues.
\end{lem}
\begin{proof}
This can be checked by the argument in \cite[Lemma 2.1]{X2} by using hyperbolicity of the homogeneous polynomial $P_{m}$ defined in subsection \ref{hi_pf}. Alternatively, one may check this lemma directly as $\beta$ is semi-positive.
\end{proof}

\begin{lem}\cite[Lemma 2.1]{X2}\label{lem_la2}
Given any finitely many hyperplanes $H_{v_1},...,H_{v_d}$ in $\mathbb C^n$, there exists an orthonormal basis $\{e_1,...,e_n\}$ of $\mathbb C^n$ such that
$$e_i\in\mathbb C^n\setminus\cup_{\lambda=1}^d H_{v_\lambda}$$
\end{lem}

Next step is to show that Proposition \ref{prop-local} for $N=k-1$ implies Hard Lefschetz and Lefschetz Decomposition Theorems for $N=k$.

\begin{prop}\label{prop-local'}(compare \cite[Proposition 1]{T} and \cite[Lemma 3.3]{X2})
Given $k\ge1$. Assume Proposition \ref{prop-local} hold for $N=k-1$ in any dimensions. Given the same data and conditions in Proposition \ref{prop-local} for $N=k$, then the corresponding Hard Lefschetz Theorem holds for $\Omega\wedge\eta_1\wedge...\wedge\eta_{2d}$, i.e.
$$\Omega\wedge\eta_1\wedge...\wedge\eta_{2d}:\Lambda^{p-d,q-d}(\mathbb C^n)\to\Lambda^{n-q+d,n-p+d}(\mathbb C^n)$$
is an isomorphism.
\end{prop}
\begin{proof}
It suffices to check the injectivity due to the dimension reason. Let $\Phi\in\Lambda^{p-d,q-d}(\mathbb C^n)$ satisfying
\begin{align}\label{eq_inj}
\Omega\wedge\eta_1\wedge...\wedge\eta_{2d}\wedge\Phi=\alpha_{11}\wedge...\wedge\alpha_{1\lambda_1}\wedge\alpha_{21}\wedge...\wedge\alpha_{k\lambda_k}\wedge\eta_1...\wedge\eta_{2d}\wedge\Phi=0.
\end{align}
The goal is to show that $\Phi=0$. Loosely speaking, the idea to achieve this goal is to ``increase" the positivity of the involved $(1,1)$-forms by decreasing the dimension of the ambient space and hence Proposition \ref{prop-local} for $N=k-1$ case could be applied. The followings are the details.

Firstly, since $\lambda_1\ge1$, we apply Lemma \ref{lem_la1} to see that for a generic $(n-1)$-dimensional subspace $H^{n-1}=H^{n-1}_{v}$ of $\mathbb C^n$ (here being generic means $v$ can be arbitrarily chosen outside finitely many proper subspaces in $\mathbb C^n$), for $2\le i\le k$, $\alpha_{i\cdot|H}$ is semi-positive and has at least $(n-\sum_{s=1}^{i-1}\lambda_s)$ positive eigenvalues, here $\cdot$ ranges from $1$ to $\lambda_i$, and $\eta_{1|H},...,\eta_{p+q-1|H}$ is semi-positive and has at least $(n-\sum_{s=1}^{k}\lambda_s)$ positive eigenvalues. Of course $\alpha_{1\cdot|H}$ keeps positive on $H$. Also we have, on $H$,
\begin{align}
\alpha_{11|H}\wedge...\wedge\alpha_{1\lambda_1|H}\wedge\alpha_{21|H}\wedge...\wedge\alpha_{k\lambda_k|H}\wedge\eta_{1|H}...\wedge\eta_{2d|H}\wedge\Phi_{|H}=0.
\end{align}
Repeating the above procedure $\lambda_1$ times, we see that for a generic $(n-\lambda_1)$-dimensional subspace $L^{n-\lambda_1}$ of $\mathbb C^n$, for $i=1,2$, $\alpha_{i\cdot|L}$ is positive on $L$, and for $3\le i\le k$, $\alpha_{i\cdot|L}$ is semi-positive and has at least $(n-\sum_{s=1}^{i-1}\lambda_s)$ positive eigenvalues, and $\eta_{1|L},...,\eta_{p+q-1|L}$ is semi-positive and has at least $(n-\sum_{s=1}^{k}\lambda_s)$ positive eigenvalues. Also we have, on $L$,
\begin{align}
\alpha_{11|L}\wedge...\wedge\alpha_{1\lambda_1|L}\wedge\alpha_{21|L}\wedge...\wedge\alpha_{k\lambda_k|L}\wedge\eta_{1|L}...\wedge\eta_{2d|L}\wedge\Phi_{|L}=0.
\end{align}
Set $\Theta_{|L}:=\widehat{\alpha_{11|L}}\wedge...\wedge\alpha_{1\lambda_1|L}\wedge\alpha_{21|L}\wedge...\wedge\alpha_{k\lambda_k|L}\wedge\eta_{1|L}...\wedge\eta_{2d|L}$, i,e, removing $\alpha_{11|L}$, then $\Phi_{|L}$ is primitive with respect to $(\Theta_{|L},\alpha_{11|L})$. Since both $\alpha_{1\cdot|L}, \alpha_{2\cdot|L}$ are positive on $L$, and for $i\ge3$, 
$$n-\sum_{s=1}^{i-1}\lambda_s=(n-\lambda_1)-((\lambda_1-1+\lambda_2)+...+\lambda_{i-1})+(\lambda_1-1)\ge(n-\lambda_1)-((\lambda_1-1+\lambda_2)+...+\lambda_{i-1}),$$
we can apply Proposition \ref{prop-local} for $N=k-1$ case in dimension $n-\lambda_1$ to conclude that
\begin{align}\label{eq_inj3}
c_d\cdot\Theta_{|L}\wedge\Phi_{|L}\wedge\overline\Phi_{|L}\ge0.
\end{align}
where $c_d:=\sqrt{-1}^{q-p}(-1)^{\frac{(p+q-2d)(p+q-2d+1)}{2}}$. This equivalently means that for a generic $(n-\lambda_1+1)$-dimensional subspace $K$ of $\mathbb C^n$ and a generic $(n-\lambda_1)$-dimensional subspace $L$ of $K$,
\begin{align}
c_d\cdot\Theta_{|K}\wedge\Phi_{|K}\wedge\overline\Phi_{|K}\wedge\sqrt{-1}dL\wedge d\bar L\ge0.
\end{align}
Now by Lemma \ref{lem_la2} we fix an orthonormal basis $f_1,...,f_{n-\lambda_1+1}$ of $K^{n-\lambda_1+1}$ such that for every $1\le l\le n-\lambda_1+1$,
\begin{align}\label{eq_f_l}
c_d\cdot\Theta_{|K}\wedge\Phi_{|K}\wedge\overline\Phi_{|K}\wedge\sqrt{-1}dL_{f_l}\wedge d\bar L_{f_l}\ge0,
\end{align}
where $L_{f_l}$ is the hyperplane in $K$ defined by $f_l\in K$ (see \eqref{H_v}). May assume $\alpha_{11|K}=\sum_{l=1}^{n-\lambda_1+1}\sqrt{-1}dL_{f_l}\wedge d\bar L_{f_l}$, then summing up \eqref{eq_f_l} over $l$ gives

\begin{align}\label{eq_inj6}
c_d\cdot\alpha_{11|K}\wedge...\wedge\alpha_{1\lambda_1|K}\wedge\alpha_{21|K}\wedge...\wedge\alpha_{k\lambda_k|K}\wedge\eta_{1|K}...\wedge\eta_{2d|K}\wedge\Phi_{|K}\wedge\overline{\Phi}_{|K}\ge0.
\end{align}
However, by assumption \eqref{eq_inj} the equality holds in \eqref{eq_inj6}, which forces \eqref{eq_inj3} with $L=L_{f_l}$ to be an equality. Therefore, by Proposition \ref{prop-local} for $N=k-1$ case we obtain $\Phi_{|L_{f_l}}=0$, which, since $p+q-2d\le p+q=n-\sum_{i=1}^k\lambda_i\le n-\lambda_1<n-\lambda_1+1$, implies that $\Phi_{|K}=0$, and eventually we conclude that $\Phi=0$ on $\mathbb C^n$.

Proposition \ref{prop-local'} is proved.
\end{proof}

Given the Hard Lefschetz Theorem for $N=k$ case, we immediately have the Lefschetz Decomposition Theorem for $N=k$ by the same arguments in \cite[Corollaries 2 and 3]{T} (also see \cite[Lemma 3.6]{X2}).
\begin{prop}\label{prop-local''}
Assume the same data and conditions in Proposition \ref{prop-local} for $N=k$, and let the quadratic form $Q$ and primitive space $P^{p-d,q-d}$ are defined with respect to $(\Omega\wedge\eta_1\wedge...\wedge\eta_{2d},\eta_{2d+1})$. Then the corresponding Lefschetz Decomposition Theorem holds, i.e.
\begin{itemize}
\item[(1)] $\Lambda^{p-d,q-d}(\mathbb C^n)$ has a $Q$-orthogonal direct sum decomposition
$$\Lambda^{p-d,q-d}(\mathbb C^n)=P^{p-d,q-d}(\mathbb C^n)\oplus\eta_{2d+1}\wedge\Lambda^{p-d-1,q-d-1}(\mathbb C^n).$$
\item[(2)] $dimP^{p-d,q-d}=dim\Lambda^{p-d,q-d}-dim\Lambda^{p-d-1,q-d-1}$.
\end{itemize}
\end{prop}

Given the above preparations, we now present  a proof for Proposition \ref{prop-local} by adapting the homotopy argument in \cite[Section 6]{T}.
\begin{proof}[Proof of Proposition \ref{prop-local}]
The $N=0$ case is exactly \cite{T}. Assume Proposition \ref{prop-local} holds for $N=k-1$ in any dimensions. Given the data and conditions in $N=k$ case, we consider the deformation
$$\Theta_t:=\left\{\begin{array}{cc} \alpha_{11}\wedge...\wedge\alpha_{1\lambda_1}\wedge\eta^t_1...\wedge\eta^t_{2d}, & k=1;\\
\alpha_{11}\wedge...\wedge\alpha_{1\lambda_1}\wedge\alpha^t_{21}\wedge...\wedge\alpha^t_{2\lambda_2}\wedge\alpha_{31}\wedge...\wedge\alpha_{k\lambda_k}\wedge\eta_1...\wedge\eta_{2d}, & k\ge2,\end{array}\right.$$
where $\eta^t_{j}:=(1-t)\eta_j+t\alpha_{11}$, $1\le j\le2d$, and $\alpha^t_{2j_2}:=(1-t)\alpha_{2j_2}+t\alpha_{11}$, $1\le j_2\le\lambda_2$, and $t\in[0,1]$. Let $P_t^{p-d,q-d}$ be the primitive space with respect to $(\Theta_t,\eta_{2d+1})$ and $Q_t$ the quadratic form defined by $\Theta_t$. By Proposition \ref{prop-local'} we know 
$Q_t$ is non-degenerate for $t\in[0,1]$, while by Proposition \ref{prop-local''}, $dim P_t^{p-d,q-d}$ keeps the same for $t\in[0,1]$. Moreover, applying the result for $N=k-1$ case gives that $Q_1$ is positive definite on $P_1^{p-d,q-d}$ (note that $\eta^1_j=\alpha_{11}$ is a positive real $(1,1)$-form for every $j=1,...,2d$, and $\alpha^1_{2j_2}=\alpha_{11}$ is also a positive real $(1,1)$-form for every $1\le j_2\le\lambda_2$), therefore, we conclude that $Q_0$ is positive definite on $P_0^{p-d,q-d}$, which is exactly the required result.

Proposition \ref{prop-local} is proved. 
\end{proof}

Using Propositions \ref{prop-local}, \ref{prop-local'} and \ref{prop-local''} and the same arguments in \cite[Proposition 2.2]{DN}, we also have the following.

\begin{lem}\label{lem-l2}
Given the same data and conditions in Proposition \ref{prop-local}, there exists a positive number $C\ge1$ such that
$$\|\Omega\wedge\eta_1\wedge...\wedge\eta_{2d}\wedge\eta_{2d+1}\wedge\Phi\|^2+Q(\Phi,\Phi)\ge C^{-1}\|\Phi\|^2,\,\,\,\,\,\Phi\in\Lambda^{p-d,q-d}(\mathbb C^n).$$
\end{lem}

\subsection{Proof of Theorem \ref{thm3}}
Given the results in the above subsection \ref{linearversion} (including Proposition \ref{prop-local} and Lemma \ref{lem-l2}), Theorem \ref{thm3} can be proved by the similar arguments in \cite[Propositions 2.3 and 2.4, Section 3]{DN} (also see \cite[Section 3.2]{X2}). For the sake of completeness we present a proof here. 

\begin{proof}[Proof of Theorem \ref{thm3}]
Given $[\Phi]\in P^{p-d,q-d}$ with $\Phi$ a smooth representative, by Lemma \ref{lem-l2} and the same $L^2$-method arguments in \cite[Propositions 2.3 and 2.4]{DN} (Lemma \ref{lem-l2} provides, after an integration over $X$, the key $L^2$-estimate which makes the $L^2$-method applicable in solving \eqref{dd^c} below), we can find a $v\in\Lambda^{p-d-1,q-d-1}(X,\mathbb C)$ satisfying
\begin{equation}\label{dd^c}
\Omega\wedge\eta_1\wedge...\wedge\eta_{2d}\wedge\eta_{2d+1}\wedge(\Phi-\sqrt{-1}\partial\bar\partial v)=0\,\,\,\,\,on\,\,\,X.
\end{equation}
In case $p-d=0$ or $q-d=0$ we replace $\sqrt{-1}\partial\bar\partial v$ by $0$. Then by Proposition \ref{prop-local} we see that
\begin{align}\label{nonneg-pt}
c_d\cdot\Omega\wedge\eta_1\wedge...\wedge\eta_{2d}\wedge(\Phi-\sqrt{-1}\partial\bar\partial v)\wedge\overline{(\Phi-\sqrt{-1}\partial\bar\partial v)}\ge0
\end{align}
holds pointwise on X, integrating which and applying Stokes Formula immediately give
$$Q([\Phi],[\Phi])=\int_Xc_d\cdot\Omega\wedge\eta_1\wedge...\wedge\eta_{2d}\wedge(\Phi-\sqrt{-1}\partial\bar\partial v)\wedge\overline{(\Phi-\sqrt{-1}\partial\bar\partial v)}\ge0.$$
Moreover, the equality holds if and only if \eqref{nonneg-pt} is an equality everywhere on $X$, and hence by Proposition \ref{prop-local} if and only if $\Phi=\sqrt{-1}\partial\bar\partial v$ on $X$, i.e. $[\Phi]=0$ in $H^{p-d,q-d}$.

Theorem \ref{thm3} is proved.
\end{proof}

\begin{rem}
We can also slightly generalize the abstract versions of the mixed Hodge-Riemann bilinear relations in \cite[Theorem 1.1]{DN13} and \cite[Theorem 4.3]{X2} to the setting of Theorem \ref{thm3}.
\end{rem}

\section{Higher-rank Khovanskii-Teissier inequality}\label{sect_kt}
Given results in Sections \ref{sect_hi} and \ref{sect_hr}, we can now prove the higher-rank Khovanskii-Teissier inequality in Theorem \ref{kt} by carefully adapting Li's arguments in \cite[Section 2.2]{Li16}. 

\begin{lem}
Assume $\lambda_0=0$ and $\lambda_1,...,\lambda_N\in\mathbb Z_{\ge1}$ with $\sum_{i=1}^N\lambda_s=n-2p$, $p\in\mathbb Z_{\ge1}$. Assume $\alpha_{ij_i}, 1\le i\le N$ and $1\le j_i\le\lambda_i$, be semi-positive closed real $(1,1)$-forms on $X$ such that $\alpha_{ij_i}$ has at least $(n-\sum_{s=0}^{i-1}\lambda_s)$ positive eigenvalues. Denote $\Omega:=\bigwedge_{1\le i\le N,1\le j_i\le \lambda_i}\alpha_{ij_i}$. Assume $\eta$ is a semi-positive closed real $(1,1)$-forms on $X$ of at least $(n-\sum_{s=0}^{N}\lambda_s)$ positive eigenvalues, and $\alpha$ a closed real $(1,1)$-forms on $X$ which is $2$-positive with respect to $(\Omega\wedge\eta^{2p-2},\alpha_{N,\lambda_N})$. Assume $p\ge2$. Then for any given $[\gamma]\in H^{p,p}(X,\mathbb C)$, there holds
\begin{align}\label{decom0}
[\gamma]=[\gamma_p]+[\eta]\wedge[\gamma_{p-1}]+[\eta^2]\wedge[\gamma_{p-2}]+...+[\eta^{p-1}]\wedge[\gamma_{1}]+\mu\cdot[\eta^{p-1}]\wedge[\alpha],
\end{align}
where $[\gamma_s]\in P^{s,s}_{([\Omega\wedge\eta^{2(p-s)}],[\eta])}(X,\mathbb C)$ for $2\le s\le p$, $[\gamma_1]\in P^{1,1}_{([\Omega\wedge\eta^{2(p-1)}],[\alpha])}(X,\mathbb C)$, and $\mu\in\mathbb C$.
\end{lem}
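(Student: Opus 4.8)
The plan is to obtain the decomposition \eqref{decom0} by iterating the Lefschetz decomposition theorems provided by the Hodge-Riemann package of Section \ref{sect_hr}, exactly in the spirit of Li's argument in \cite[Section 2.2]{Li16}. First I would check that the Hodge-Riemann hypotheses of Theorem \ref{thm3} (and hence Remark \ref{rem_hr}) are met at every stage. Setting $q=p$, the forms $\Omega\wedge\eta^{2d}$ together with $\eta$ play the roles of $\Omega\wedge\eta_1\wedge\cdots\wedge\eta_{2d}$ and $\eta_{2d+1}$, since all the $\eta_j$ may be taken equal to $\eta$, which is semi-positive with the required number of positive eigenvalues; thus by Remark \ref{rem_hr}(b), for each $2\le s\le p$ the space $H^{s,s}(X,\mathbb C)$ admits a $Q$-orthogonal splitting
$$H^{s,s}(X,\mathbb C)=P^{s,s}_{([\Omega\wedge\eta^{2(p-s)}],[\eta])}(X,\mathbb C)\oplus[\eta]\wedge H^{s-1,s-1}(X,\mathbb C).$$

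The main body of the argument is then a downward induction on $s$. Starting from $[\gamma]\in H^{p,p}(X,\mathbb C)$, apply the decomposition at level $s=p$ to write $[\gamma]=[\gamma_p]+[\eta]\wedge[\delta_{p-1}]$ with $[\gamma_p]$ primitive and $[\delta_{p-1}]\in H^{p-1,p-1}(X,\mathbb C)$; then decompose $[\delta_{p-1}]$ at level $s=p-1$, and so on, each time peeling off a primitive piece $[\gamma_s]$ and passing the remainder down one cohomological degree, picking up an extra factor of $[\eta]$. After $p-1$ steps one is left with a class in $H^{1,1}(X,\mathbb C)$ multiplied by $[\eta^{p-1}]$. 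To handle this last factor one invokes the Hodge index theorem of Section \ref{sect_hi}: the pair $(\Omega\wedge\eta^{2p-2},\alpha)$ is of the type covered by Theorem \ref{thm2} (here one uses that $\alpha$ is $2$-positive with respect to $(\Omega\wedge\eta^{2p-2},\alpha_{N,\lambda_N})$ and that the $\alpha_{ij}$'s and $\eta$ supply the remaining positive factors), so by Remark \ref{rem_cor}(1)(b) one gets $H^{1,1}(X,\mathbb C)=P^{1,1}_{([\Omega\wedge\eta^{2(p-1)}],[\alpha])}(X,\mathbb C)\oplus\mathbb C[\alpha]$; writing the residual $(1,1)$-class as $[\gamma_1]+\mu[\alpha]$ in this splitting and multiplying back by $[\eta^{p-1}]$ yields precisely \eqref{decom0}.

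The one point that requires genuine care — and which I expect to be the main obstacle — is the bookkeeping that the primitivity conditions produced by the iteration are the ones claimed. At the step where $[\delta_{s}]$ gets decomposed, the primitive component lands in $P^{s,s}$ relative to the pair $([\Omega\wedge\eta^{2(p-s)}],[\eta])$ used at that step, and one must confirm this matches $P^{s,s}_{([\Omega\wedge\eta^{2(p-s)}],[\eta])}(X,\mathbb C)$ as stated; here the fact that $p+q=2p$ and $\sum\lambda_i=n-2p$ makes the degrees work out so that the relevant Lefschetz operator is $[\Omega\wedge\eta^{2(p-s)}]:H^{s,s}\to H^{n-s,n-s}$, which is exactly the isomorphism of Remark \ref{rem_hr}(a) with $d=p-s$. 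One should also double-check that $\Omega\wedge\eta^{2p-2}$ really is a strictly positive $(n-2,n-2)$-form (so that the Hodge index theorem applies to it), which follows from the eigenvalue hypotheses on the $\alpha_{ij}$'s and $\eta$ together with Lemma \ref{lem_c3}(1) applied in an appropriate configuration. Once these compatibility checks are in place, the decomposition is immediate, and uniqueness of each piece (if needed later) follows from the directness of the splittings.
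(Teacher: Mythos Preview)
Your proposal is correct and follows essentially the same route as the paper's proof: iterate the Lefschetz decomposition of Remark~\ref{rem_hr}(b) (with all $\eta_j=\eta$ and $q=p$) to peel off the primitive pieces $[\gamma_s]$ for $s=p,p-1,\dots,2$, then at the final stage apply Theorem~\ref{thm2} and Remark~\ref{rem_cor}(1)(b) to split the residual $(1,1)$-class as $[\gamma_1]+\mu[\alpha]$. Your bookkeeping check that $d=p-s$ gives the correct pair $([\Omega\wedge\eta^{2(p-s)}],[\eta])$ at each step is exactly right, and the paper's proof is in fact terser on this point than yours.
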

\begin{proof}
In Theorem \ref{thm3} and Remark \ref{rem_hr}, we choose $\eta_1=...=\eta_{2p-2}=\eta$, then Remark \ref{rem_hr}(b) immediately implies
\begin{align}\label{decom1}
[\gamma]&=[\gamma_p]+[\eta]\wedge[\tilde\gamma_{p-1}]\nonumber\\
&=[\gamma_p]+[\eta]\wedge([\gamma_{p-1}]+[\eta]\wedge[\tilde\gamma_{p-2}])\nonumber\\
&=[\gamma_p]+[\eta]\wedge[\gamma_{p-1}]+[\eta^2]\wedge[\tilde\gamma_{p-2}]\nonumber\\
&=...\nonumber\\
&=[\gamma_p]+[\eta]\wedge[\gamma_{p-1}]+[\eta^2]\wedge[\gamma_{p-2}]+...+[\eta^{p-2}]\wedge[\gamma_{2}]+[\eta^{p-1}]\wedge[\tilde\gamma_{1}].
\end{align}
with $[\gamma_s]\in P^{p,p}_{([\Omega\wedge\eta^{2(p-s)}],[\eta])}(X,\mathbb C)$, $2\le s\le p$ and $[\tilde\gamma_{1}]\in H^{1,1}(X,\mathbb C)$. 

To proceed, we apply Theorem \ref{thm2} to see that $([\Omega\wedge\eta^{2(p-1)}],[\alpha])$ satisfies the Hodge index theorem, and hence by Remark \ref{rem_cor}(b), we can decompose $[\tilde\gamma_1]$ as
\begin{align}\label{decom2}
[\tilde\gamma_1]=[\gamma_1]+\mu[\alpha]
\end{align}
with $[\gamma_1]\in P^{1,1}_{([\Omega\wedge\eta^{2(p-1)}],[\alpha])}(X,\mathbb C)$ and $\mu\in\mathbb C$.

Plugging \eqref{decom2} into \eqref{decom1} gives the required decomposition \eqref{decom0}.
\end{proof}

Then we may express the integrands involved in \eqref{kt} as follows.

\begin{lem}\label{lem_eq}
The followings hold.
\begin{itemize}
\item[(1)] $[\Omega]\wedge[\eta^{p-1}]\wedge[\alpha]\wedge[\gamma]=\mu\cdot[\Omega]\wedge[\eta^{2(p-1)}]\wedge[\alpha^2]$.
\item[(2)] $[\Omega]\wedge[\gamma]\wedge[\bar\gamma]=|\mu|^2\cdot[\Omega]\wedge[\eta^{2(p-1)}]\wedge[\alpha^2]+\sum_{s=1}^p[\Omega]\wedge[\eta^{2(p-s)}]\wedge[\gamma_s]\wedge[\bar\gamma_{s}]$.
\end{itemize}
\end{lem}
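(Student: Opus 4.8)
The plan is to substitute the decomposition \eqref{decom0} into the two wedge products and use the orthogonality properties of the primitive components established in the previous lemma, together with the Lefschetz decomposition from Theorem \ref{thm3} and Remark \ref{rem_hr}. The key structural fact I will exploit is that if $[\gamma_s]\in P^{s,s}_{([\Omega\wedge\eta^{2(p-s)}],[\eta])}(X,\mathbb C)$, then by definition $[\Omega]\wedge[\eta^{2(p-s)}]\wedge[\eta]\wedge[\gamma_s]=0$, and more generally, by iterating the primitivity relation (as in the Hard Lefschetz / Lefschetz decomposition package of Remark \ref{rem_hr}), any top-degree integral of the form $[\Omega]\wedge[\eta^{a}]\wedge[\gamma_s]\wedge(\text{something})$ with enough powers of $[\eta]$ vanishes. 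I would first record these vanishing statements explicitly as a preliminary observation, since they are used repeatedly in both parts.

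For part (1), I would wedge \eqref{decom0} with $[\Omega]\wedge[\eta^{p-1}]\wedge[\alpha]$ term by term. Each term $[\Omega]\wedge[\eta^{p-1}]\wedge[\alpha]\wedge[\eta^{p-s}]\wedge[\gamma_s]=[\Omega]\wedge[\eta^{2p-s-1}]\wedge[\alpha]\wedge[\gamma_s]$ for $1\le s\le p$ should vanish: for $s\ge2$ because $[\gamma_s]$ is primitive with respect to $([\Omega\wedge\eta^{2(p-s)}],[\eta])$ and $2p-s-1\ge 2(p-s)+1$ so there are at least $2(p-s)+1$ free powers of $[\eta]$ available to kill it via the annihilation property of primitive classes; for $s=1$ because $[\gamma_1]\in P^{1,1}_{([\Omega\wedge\eta^{2(p-1)}],[\alpha])}(X,\mathbb C)$ means precisely $[\Omega]\wedge[\eta^{2(p-1)}]\wedge[\alpha]\wedge[\gamma_1]=0$, which is exactly the $s=1$ term. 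Only the last term $\mu\cdot[\Omega]\wedge[\eta^{p-1}]\wedge[\alpha]\wedge[\eta^{p-1}]\wedge[\alpha]=\mu\cdot[\Omega]\wedge[\eta^{2(p-1)}]\wedge[\alpha^2]$ survives, giving (1).

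For part (2), I would wedge \eqref{decom0} with $[\Omega]\wedge[\bar\gamma]$, where $[\bar\gamma]$ has its own conjugate decomposition. Expanding the product of the two decompositions, the cross terms pairing $[\eta^{p-s}]\wedge[\gamma_s]$ with $[\eta^{p-t}]\wedge[\bar\gamma_t]$ for $s\ne t$, and the terms pairing a $[\gamma_s]$-component with the $\mu$-component, all vanish by the same primitivity/annihilation mechanism (the $Q$-orthogonality of the Lefschetz decomposition, Remark \ref{rem_hr}(b)): whenever the two indices differ, one of the primitive factors meets strictly more powers of $[\eta]$ than its primitivity threshold allows, or lands in a complementary summand of the decomposition. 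The diagonal terms contribute $\sum_{s=1}^p[\Omega]\wedge[\eta^{2(p-s)}]\wedge[\gamma_s]\wedge[\bar\gamma_s]$, and the $\mu\bar\mu$-term contributes $|\mu|^2[\Omega]\wedge[\eta^{2(p-1)}]\wedge[\alpha^2]$; the mixed $\mu$-vs-$\gamma_s$ terms vanish exactly as in part (1). I expect the main obstacle to be bookkeeping the vanishing of the cross terms cleanly — in particular making precise the claim that a primitive class with respect to $([\Omega\wedge\eta^{2(p-s)}],[\eta])$, once wedged against additional powers of $[\eta]$ beyond the $2(p-s)+1$ threshold and against another primitive class of a different level, integrates to zero; this requires invoking the Hard Lefschetz isomorphism and $Q$-orthogonality from Remark \ref{rem_hr}(a)–(b) rather than a naive degree count, and care is needed because the reference classes $[\Omega\wedge\eta^{2(p-s)}]$ change with $s$.
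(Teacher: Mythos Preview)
Your proposal is correct and follows essentially the same route as the paper: substitute the decomposition \eqref{decom0} and kill every non-surviving term using the primitivity relations $[\Omega]\wedge[\eta^{2(p-s)+1}]\wedge[\gamma_s]=0$ for $s\ge 2$ and $[\Omega]\wedge[\eta^{2(p-1)}]\wedge[\alpha]\wedge[\gamma_1]=0$; the paper carries this out explicitly for (1) and then says (2) ``can be checked similarly.'' One simplification worth noting: the cross-term vanishing in (2) is more elementary than you anticipate---since primitivity gives $[\Omega]\wedge[\eta^{2(p-s)+1}]\wedge[\gamma_s]=0$ as a \emph{cohomology class} (not merely as an integral), wedging it with anything further is automatically zero, so for $s>t$ the term $[\Omega]\wedge[\eta^{2p-s-t}]\wedge[\gamma_s]\wedge[\bar\gamma_t]$ dies directly without appealing to Hard Lefschetz or $Q$-orthogonality.
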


\begin{proof}
These are consequences of the decomposition of $[\gamma]$ in \eqref{decom0}.

For (1), we compute
\begin{align}
&[\Omega]\wedge[\eta^{p-1}]\wedge[\alpha]\wedge[\gamma]\nonumber\\
&=[\Omega]\wedge[\eta^{p-1}]\wedge[\alpha]\wedge\left([\gamma_p]+[\eta]\wedge[\gamma_{p-1}]+...+[\eta^{p-1}]\wedge[\gamma_{1}]+\mu[\eta^{p-1}]\wedge[\alpha]\right)\nonumber\\
&=\mu[\Omega]\wedge[\eta^{2(p-1)}]\wedge[\alpha^2]+\sum_{s=1}^p[\Omega]\wedge[\eta^{2p-s-1}]\wedge[\alpha]\wedge[\gamma_s]\nonumber\\
&=\mu[\Omega]\wedge[\eta^{2(p-1)}]\wedge[\alpha^2]+[\Omega]\wedge[\eta^{2(p-1)}]\wedge[\alpha]\wedge[\gamma_1]+\sum_{s=2}^p[\Omega]\wedge[\eta^{2(p-s)+1}]\wedge[\gamma_s]\wedge[\alpha]\wedge[\eta^{s-2}]\nonumber\\
&=\mu[\Omega]\wedge[\eta^{2(p-1)}]\wedge[\alpha^2]\nonumber,
\end{align}
where we have used $[\Omega]\wedge[\eta^{2(p-1)}]\wedge[\alpha]\wedge[\gamma_1]=0$ and $[\Omega]\wedge[\eta^{2(p-s)+1}]\wedge[\gamma_s]=0$ for any $2\le s\le p$.

The item (2) can be checked similarly.
\end{proof}

As an application, we have
\begin{lem}
\begin{align}
&\left(\int_X\Omega\wedge(\eta^{p-1}\wedge\alpha)^2\right)\left(\int_X\Omega\wedge\gamma\wedge\bar\gamma\right)-\left(\int_X\Omega\wedge(\eta^{p-1}\wedge\alpha)\wedge\gamma\right)\left(\int_X\Omega\wedge(\eta^{p-1}\wedge\alpha)\wedge\bar\gamma\right)\nonumber\\
&=\left(\int_X\Omega\wedge(\eta^{p-1}\wedge\alpha)^2\right)\left(\sum_{s=1}^p\int_X[\Omega]\wedge[\eta^{2(p-s)}]\wedge[\gamma_s]\wedge[\bar\gamma_{s}]\right)\nonumber.
\end{align}
\end{lem}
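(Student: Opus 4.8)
The plan is to combine the two identities in Lemma~\ref{lem_eq} by direct substitution. Write $A:=\int_X\Omega\wedge(\eta^{p-1}\wedge\alpha)^2$, which by Lemma~\ref{lem_eq}(1) (taking $\gamma=\eta^{p-1}\wedge\alpha$, so that the corresponding coefficient is $\mu=1$ and all primitive components vanish) equals $\int_X\Omega\wedge\eta^{2(p-1)}\wedge\alpha^2$; note this is the quantity appearing on the right-hand side of both parts of Lemma~\ref{lem_eq}. Then Lemma~\ref{lem_eq}(1) gives $\int_X\Omega\wedge(\eta^{p-1}\wedge\alpha)\wedge\gamma=\mu A$ and, applying it to $\bar\gamma$ (whose associated scalar in the decomposition \eqref{decom0} is $\bar\mu$), $\int_X\Omega\wedge(\eta^{p-1}\wedge\alpha)\wedge\bar\gamma=\bar\mu A$, so the product of these two integrals is $|\mu|^2A^2$.

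Next I would expand the first product using Lemma~\ref{lem_eq}(2):
\begin{align}
A\cdot\int_X\Omega\wedge\gamma\wedge\bar\gamma
&=A\left(|\mu|^2 A+\sum_{s=1}^p\int_X\Omega\wedge\eta^{2(p-s)}\wedge\gamma_s\wedge\bar\gamma_s\right)\nonumber\\
&=|\mu|^2A^2+A\sum_{s=1}^p\int_X\Omega\wedge\eta^{2(p-s)}\wedge\gamma_s\wedge\bar\gamma_s.\nonumber
\end{align}
Subtracting the expression $|\mu|^2A^2$ for the second product, the $|\mu|^2A^2$ terms cancel and one is left precisely with
$$A\sum_{s=1}^p\int_X\Omega\wedge\eta^{2(p-s)}\wedge\gamma_s\wedge\bar\gamma_s,$$
which is the claimed right-hand side. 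This is the entire argument; it is a purely formal manipulation of the two preceding lemmas, so no genuine obstacle arises here — the substantive content (the Hodge-Riemann relations and the Hodge index theorem that produce the decomposition \eqref{decom0} and the vanishing statements) has already been established.

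The only point demanding a line of care is the bookkeeping in applying Lemma~\ref{lem_eq}(1) to $\bar\gamma$ rather than $\gamma$: one must observe that conjugating the decomposition \eqref{decom0} of $[\gamma]$ yields a decomposition of $[\bar\gamma]$ of the same shape, with $[\gamma_s]$ replaced by $[\bar\gamma_s]$ and the scalar $\mu$ replaced by $\bar\mu$ (since $[\eta]$ and $[\alpha]$ are real classes and the primitive subspaces are conjugation-invariant), so that Lemma~\ref{lem_eq}(1) applies verbatim and produces $\bar\mu A$. Once this is noted, the cancellation is immediate and the proof is complete.
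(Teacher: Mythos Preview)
Your proof is correct and follows exactly the paper's approach: the paper's own proof consists of the single sentence ``Applying Lemma~\ref{lem_eq}(1) and (2), the result follows immediately,'' and you have simply spelled out that substitution and cancellation in detail. Your added remark on conjugating the decomposition \eqref{decom0} to handle the $\bar\gamma$ term is a useful clarification but not a departure from the paper's method.
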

\begin{proof}
Applying Lemma \ref{lem_eq}(1) and (2), the result follows immediately.
\end{proof}

Now we are ready to give an 
\begin{proof}[End of the proof of Theorem \ref{thm_kt}]
Since $[\gamma_s]\in P^{s,s}_{([\Omega\wedge\eta^{2(p-s)}],[\eta])}(X,\mathbb C)$ for $2\le s\le p$, and $[\gamma_1]\in P^{1,1}_{([\Omega\wedge\eta^{2(p-1)}],[\alpha])}(X,\mathbb C)$, by Theorems \ref{thm2} and \ref{thm3} we have, for every for $1\le s\le p$,
\begin{align}
(-1)^s\int_X[\Omega]\wedge[\eta^{2(p-s)}]\wedge[\gamma_s]\wedge[\bar\gamma_{s}]\ge0
\end{align}
with equality holds if and only if $[\gamma_s]=0$. On the other hand, by assumption we also have $\Omega\wedge(\eta^{p-1}\wedge\alpha)^2>0$ on $X$, implying
\begin{align}
\int_X\Omega\wedge(\eta^{p-1}\wedge\alpha)^2>0.
\end{align}

(ii)$\Rightarrow$(i): If the condition (ii) holds, then by Remark \ref{rem_hr} (c), $[\gamma_{2d}]=0$ for $1\le d\le [p/2]$, and hence
\begin{align}
\sum_{s=1}^p\int_X[\Omega]\wedge[\eta^{2(p-s)}]\wedge[\gamma_s]\wedge[\bar\gamma_{s}]&=\sum_{1\le d\le[(p+1)/2]}\int_X[\Omega]\wedge[\eta^{2(p-(2d-1))}]\wedge[\gamma_{2d-1}]\wedge[\bar\gamma_{2d-1}]\nonumber\\
&\le0\nonumber,
\end{align}
with equality holds if and only if $[\gamma_{2d-1}]=0$ for every $1\le d\le[(p+1)/2]$. 

Therefore, the inequality \eqref{kt} is proved for every $[\gamma]\in H^{p,p}(X,\mathbb C)$, with equality holds for some $[\gamma]$ if and only if $[\gamma]$ is proportional to $[\eta^{p-1}\wedge\alpha]$.\\

(i)$\Rightarrow$(ii): Assume a contradiction that there is a $1\le d'\le[p/2]$ with $h^{2d'-1,2d'-1}<h^{2d',2d'}$, then we choose $[\beta]\in P^{2d',2d'}_{([\Omega\wedge\eta^{2(p-2d')}],[\eta])}(X,\mathbb C)\setminus\{0\}$ and consider 
$$[\gamma']:=[\eta^{p-1}\wedge\alpha]+[\eta^{p-2d'}]\wedge[\beta].$$
By the above discussions, it is easy to see that
\begin{align}
&\left(\int_X\Omega\wedge(\eta^{p-1}\wedge\alpha)^2\right)\left(\int_X\Omega\wedge\gamma'\wedge\overline{\gamma'}\right)-\left(\int_X\Omega\wedge(\eta^{p-1}\wedge\alpha)\wedge\gamma'\right)\left(\int_X\Omega\wedge(\eta^{p-1}\wedge\alpha)\wedge\overline{\gamma'}\right)\nonumber\\
&=\left(\int_X\Omega\wedge(\eta^{p-1}\wedge\alpha)^2\right)\left(\int_X[\Omega]\wedge[\eta^{2(p-2d')}]\wedge[\beta]\wedge[\overline{\beta}]\right)\nonumber\\
&>0\nonumber,
\end{align}
which contradicts to condition (i).

Theorem \ref{thm_kt} is proved.
\end{proof}

\begin{rem}\label{general rem}
One can extract from the above proof a general result that the higher-rank Khovanskii-Teissier inequality actually holds with respect to general cohomology classes satisfying certain Hodge-Riemann bilinear relations (we thank an anonymous referee for pointing out this). Precisely, given $[\Omega]\in H^{n-2p,n-2p}(X,\mathbb R)$, $p\ge2$, and $[\eta],[\alpha]\in H^{1,1}(X,\mathbb R)$, assume that $([\Omega\wedge\eta^{2p-2s}],[\eta])$ satisfies the Hodge-Riemann bilinear relation for each $2\le s\le p$, and $([\Omega\wedge\eta^{2p-2}],[\alpha])$ satisfies the Hodge index theorem, then the followings are equivalent:
\begin{itemize}
\item[(i)] For every $[\gamma]\in H^{p,p}(X,\mathbb C)$, there holds
\begin{align}\label{kt=}
\left(\int_X\Omega\wedge(\eta^{p-1}\wedge\alpha)\wedge\gamma\right)\left(\int_X\Omega\wedge(\eta^{p-1}\wedge\alpha)\wedge\bar\gamma\right)\ge\left(\int_X\Omega\wedge(\eta^{p-1}\wedge\alpha)^2\right)\left(\int_X\Omega\wedge\gamma\wedge\bar\gamma\right);
\end{align}
\item[(ii)] For all $1\le l\le[p/2]$, $h^{2l-1,2l-1}=h^{2l,2l}$.
\end{itemize} 
Moreover, if condition (ii) holds, then a $[\gamma]\in H^{p,p}(X,\mathbb C)$ satisfies the equality in \eqref{kt=} if and only if $[\gamma]$ is proportional to $[\eta^{p-1}\wedge\alpha]$. 
\end{rem}

\begin{rem}
Given Theorems \ref{thm2} and \ref{thm3},  one can also use the similar arguments to extend \cite[Theorem 1.3(1)]{Li16} to our current setting. 
\end{rem}

\section*{Acknowledgements}
The author is grateful to Jian Xiao for a number of helpful discussions in Spring 2019. The author also thanks Ping Li and Jian Xiao for valuable comments on the results and pointing out several papers related to this work, and the referees for the careful reading and very useful comments and suggestions, which improve the presentation of this paper. The author particularly thanks an anonymous referee for pointing out the result in Remark \ref{general rem}.

\end{document}